\renewcommand*{\backref}[1]{}
\renewcommand*{\backrefalt}[4]{%
    \ifcase #1 (Not cited.)%
    \or        (Cited on page~#2.)%
    \else      (Cited on pages~#2.)%
    \fi}
\newtheorem{theorem}{Theorem}[section]
\newtheorem{definition}{Definition}[section]
\newtheorem{proposition}[theorem]{Proposition}
\newcommand{\R}{\mathbb{R}}
\DeclareMathOperator{\End}{End}
\newcommand{\muu}{\text{\sfrac{1}{2}}}
\newcommand{\oh}{\frac{1}{2}}
\DeclareMathOperator{\arccosh}{arccosh}
 \author{J. P. Fatelo and N. Martins-Ferreira}
 \address{School of Technology and Management, Centre for Rapid and Sustainable Product Development - CDRSP, Polytechnic Institute of Leiria, P-2411-901 Leiria, Portugal.}
 \email{martins.ferreira@ipleiria.pt}
\title[]{Mobility spaces and geodesics for the n-sphere}
 \subjclass[2010]{Primary 08A99, 03G99; Secondary 20N99, 08C15}
 \keywords{Mobility algebra, mobi algebra, mobility space, affine space, affine mobility space, unit interval, ternary operation, geodesic path, geodesics, sphere, n-sphere, Slerp, damped harmonic oscillator, projectiles}
\thanks{  This work is supported by the Fundação para a Ciência e a Tecnologia (FCT) and Centro2020 through the Project references: UID/Multi/04044/2019; PAMI - ROTEIRO/0328/2013 (Nº 022158); Next.parts (17963), and also by CDRSP and ESTG from the Polytechnic Institute of Leiria.
}
\begin{document}

\begin{abstract}

We introduce an algebraic system which can be used as a model for spaces
with geodesic paths between any two of their points. This new algebraic structure is based on the notion of mobility algebra which has recently been introduced as a model for the unit interval of real numbers. We show that there is a strong connection between modules over a ring and affine mobility spaces over a mobility algebra. However, geodesics in general fail to be affine thus giving rise to the new algebraic structure of mobility space. We show that the so called formula for spherical linear interpolation, which gives geodesics on the n-sphere, is an example of a mobility space over the unit interval mobility algebra. 

\end{abstract}

\maketitle

\today; \currenttime

\section{Introduction}

The purpose of this work is to introduce an algebraic system which can be used to model spaces with geodesics. The main idea stems from the interplay between algebra and geometry. In affine geometry the notion of affine space is well suited for this purpose. Indeed, in an affine space we have scalar multiplication, addition and subtraction and so it is possible to parametrize, for any instant $t\in [0,1]$, a straight line between points $x$ and $y$ with the formula $(1-t)x+ty$. Such a line is clearly a geodesic path from $x$ to $y$. 
In general terms, we may use an operation $q=q(x,t,y)$ to indicate the position, at an instant~$t$, of a particle moving in a space $X$ from a point $x$ to a point $y$. If the particle is moving along a geodesic path then this operation must certainly verify some conditions. The aim of this project is to present an algebraic structure, $(X,q)$, with axioms that are verified by any operation~$q$ representing a geodesic path in  a space between any two of its points. 

The first results concerning this investigation were presented in \cite{ccm_magmas} and~\cite{mobi}. In \cite{preprint}, the first version of this text, mobility algebras (or mobi algebras) and mobility affine spaces are considered in more details. The remaining study of mobility spaces (mobi space for short) already presented in \cite{preprint} has been reformulated and further developed here. 

In the preprint \cite{preprint}, it is shown that every space with unique geodesics can be given a mobi space structure. However, when geodesics are not unique (for instance when connecting antipodal points on the sphere) it is still possible to define a mobi space structure on that space. This is done by making appropriate choices and it is illustrated in the last section of this paper. The example of the sphere is considered with spherical linear interpolation (Slerp) whose formula gives rise to a mobi space structure.

\section{Mobi algebra}\label{sec_mobi_alg}

In this section we briefly recall the notion of mobi algebra, introduced in \cite{mobi}, and some of its basic properties.
Several examples  of different nature were presented in \cite{mobi} and \cite{preprint}. 
As we will see in the next section (while introducing the notion of mobi space) a mobility algebra plays the role of scalars (for a mobility space) in the same way as a ring (or a field) models the scalars for a module (or a vector space) over the base ring (or field). 

 In order to have an intuitive interpretation of its axioms we may consider a mobi algebra as a mobi space over itself and use the geometric intuition provided in section \ref{sec_mobi_space}.
Namely, that the operation $p(x,t,y)$ is the position of a particle moving from a point $x$ to a point $y$ at an instant $t$ while following a geodesic path.

\begin{definition}\cite{mobi}\label{mobi_algebra}
A mobi algebra is a system $(A,p,0,\muu,1)$, in which $A$ is a set, $p$ is a ternary operation and $0$, $\muu$ and $1$ are elements of A, that satisfies the following axioms:
\begin{enumerate}[label={\bf (A\arabic*)}]
\item\label{alg_mu} $p(1,\muu,0)=\muu$
\item\label{alg_01} $p(0,a,1)=a$
\item\label{alg_idem} $p(a,b,a)=a$
\item\label{alg_0} $p(a,0,b)=a$
\item\label{alg_1} $p(a,1,b)=b$
\item\label{alg_cancel} $p(a,\muu,b_1)=p(a,\muu,b_2)\implies b_1=b_2$
\item\label{alg_homo} $p(a,p(c_1,c_2,c_3),b)=p(p(a,c_1,b),c_2,p(a,c_3,b))$
\item\label{alg_medial}
$p(p(a_1,c,b_1),\muu,p(a_2,c,b_2))=p(p(a_1,\muu,a_2),c,p(b_1,\muu,b_2))$.
\end{enumerate}
\end{definition}

Some properties of mobi algebras can be suitably expressed in terms of a unary operation~"$\overline{()}$" and binary operations~"$\cdot$", "$\circ$" and "$\oplus$" defined as follows (see \cite{mobi} for more details).
\begin{definition}\label{binary_definition} Let $(A,p,0,\muu,1)$ be a mobi algebra. We define:
\begin{eqnarray}
\label{def_complementar}\overline{a}&=&p(1,a,0)\\
\label{def_product}a\cdot b&=&p(0,a,b)\\
\label{def_star}a\oplus b&=&p(a,\muu,b)\\
\label{def_oplus}a\circ b&=&p(a,b,1).
\end{eqnarray}
\end{definition}

Some properties of a mobi algebra follow. If $(A,p,0,\muu,1)$ is a mobi algebra, then: 
\begin{eqnarray}
\label{Bmuu} \overline{\muu}&=&\muu\\
\label{B130} a\cdot\muu=\muu\cdot a&=&0\oplus a\\
\label{cancell-muu} \muu\cdot a=\muu\cdot a'&\Rightarrow& a=a'\\
\label{B7} p(\overline{a},\muu,a)&=&\muu\\
\label{Boverlinea} \overline{a}=a &\Rightarrow & a=\muu\\
\label{complementary-p} \overline{p(a,b,c)}&=&p(\overline{a},b,\overline{c})\\
\label{commut} p(c,b,a)&=&p(a,\overline{b},c)\\
\label{circ_cdot} \overline{a\circ b}&=&\overline{b}\cdot\overline{a}\\
\label{important}\muu\cdot p(a,b,c)&=&(\overline{b}\cdot a)\oplus (b\cdot c).
\end{eqnarray}

We end this section recalling that if $(A,p,0,\muu,1)$ is a mobi algebra then $(A,\oplus)$, with $\oplus$ defined in (\ref{def_star}), is a midpoint algebra.
A midpoint algebra (\cite{Escardo}, see also \cite{Jezek})  consists of a set and a binary operation 
$\oplus$ satisfying the following axioms:
\begin{eqnarray}
 \text{(idempotency)} & x\oplus x= x\\
 \text{(commutativity)} & x\oplus y= y \oplus x\\
 \text{(cancellation)} & x\oplus y= x' \oplus y\Rightarrow x=x'\\
\label{medial}\text{(mediality)}&(x\oplus y)\oplus (z\oplus w)=( x \oplus z)\oplus (y\oplus w) .
\end{eqnarray}

\section{Mobi space}\label{sec_mobi_space}

In this section we give the definition of a mobi space over a mobi algebra. Its main purpose is to serve as a model for spaces with a geodesic path connecting any two points. It is similar to a module over a ring in the sense that it has an associated mobi algebra which behaves as the set of scalars. 
In Section \ref{sec:modules} we show that the particular case of affine mobi space is indeed the same as a module over a ring when the mobi algebra is a ring. In the last section, we present examples of geodesics on the $n$-sphere and on an hyperbolic $n$-space as mobility spaces over the unit interval.

\begin{definition}\label{mobi_space}
Let $(A,p,0,\muu,1)$ be a mobi algebra. An $A$-mobi space $(X,q)$, consists of a set $X$ and a map $q\colon{X\times A\times X\to X}$ such that:
\begin{enumerate}[label={\bf (X\arabic*)}]
\item\label{space_0} $q(x,0,y)=x$
\item\label{space_1} $q(x,1,y)=y$
\item\label{space_idem} $q(x,a,x)=x$
\item\label{space_cancel} $q(x,\muu,y_1)=q(x,\muu,y_2)\implies y_1=y_2$
\item\label{space_homo} $q(q(x,a,y),b,q(x,c,y))=q(x,p(a,b,c),y)$
\end{enumerate}
\end{definition}

The axioms \ref{space_0} to \ref{space_homo} are the natural generalizations of axioms \ref{alg_idem} to \ref{alg_homo} of a mobi algebra. The natural generalization of \ref{alg_medial} would be
\begin{eqnarray}\label{affine}
q(q(x_1,a,y_1),\muu,q(x_2,a,y_2))=q(q(x_1,\muu,x_2),a,q(y_1,\muu,y_2)).
\end{eqnarray}
This condition, however, is too restrictive and is not in general verified by geodesic paths.
That is the reason why we do not include it.
When (\ref{affine}) is satisfied for all $x_1,x_2,y_1,y_2\in X$ and $a\in A$, then we say that the $A$-mobi space $(X,q)$ is affine and speak of an $A$-mobi affine space (see \cite{preprint} and Subsection \ref{affineness}).

If we write $x\oplus y$ instead of $q(x,\muu,y)$ and consider the special case of  equation $(\ref{affine})$ when $a=\muu$ then we get the usual medial law (\ref{medial}). As an illustration of the fact that the medial law does not hold true in general for geodesic paths, let us consider the example of the unit sphere. The midpoint $a\oplus b$ of two points $a$ and $b$ on the equator is again on the equator. Midpoint of North Pole $n$ and any point $c$ on equator is on the $45^{\textrm{th}}$ parallel. But the geodesic midpoint of two points
on the $45^{\textrm{th}}$ parallel does not live on the $45^{\textrm{th}}$ parallel, but somewhat to the
North of it; the $45^{\textrm{th}}$ parallel is not a geodesic. So $(a\oplus b)\oplus (n\oplus n)$ is on the
$45^{\textrm{th}}$ parallel, but $(a\oplus n) \oplus (b \oplus n)$ is not, but is north of it. This phenomenon is an aspect of the Gaussian curvature of the sphere.

Every affine space $(X,+)$ over a commutative field of scalars can be considered as an affine mobi space $(X,q)$ with $q(x,t,y)=(1-t)x+ty$. At the end of this paper we give the formula for geodesics on the $n$-sphere in terms of a mobi space structure.

Here are some immediate consequences of the axioms for a mobi space.

\begin{proposition}\label{properties_space} Let $(A,p,0,\muu,1)$ be a mobi algebra and $(X,q)$ an A-mobi space. It follows that:
\begin{enumerate}[label={\bf (Y\arabic*)}]
\item\label{Y1} $q(y,a,x)=q(x,\overline{a},y)$
\item\label{Y2} $q(y,\muu,x)=q(x,\muu,y)$
\item\label{Y3} $q(x,a,q(x,b,y))=q(x,a\cdot b,y)$
\item\label{Y4} $q(q(x,a,y),b,y)=q(x,a\circ b,y)$
\item\label{Y7} $q(q(x,a,y),\muu,q(x,b,y))=q(x,a\oplus b,y)$
\item\label{Y6} $q(x,\muu,q(x,a,y))=q(x,a,q(x,\muu,y))$
\item\label{Y5} $q(q(x,a,y),\muu,q(y,a,x))=q(x,\muu,y)$
\item\label{Y8} $q(q(q(x,a,y),b,x),\muu,q(x,b,q(x,c,y)))\\
=q(x,\muu,q(x,p(a,b,c),y))$
\item\label{Y9} $q(x,a,y)=q(y,a,x)\Rightarrow q(x,a,y)=q(x,\muu,y)$
\item\label{Y10} $q(x,a,y)=q(x,b,y)\Rightarrow q(x,p(a,t,b),y)=q(x,a,y)$, \text{for all $t$}.
\end{enumerate}
\end{proposition}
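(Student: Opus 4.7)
\medskip
\noindent\textbf{Proof proposal.} The plan is to derive each item by a single application of axiom \ref{space_homo} to a suitably disguised product, using \ref{space_0}--\ref{space_1} to introduce or suppress factors of $0$ and $1$ on the algebra side, and appealing to the mobi-algebra identities recalled in Section~\ref{sec_mobi_alg} at the end.

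For \ref{Y1} I would write $y=q(x,1,y)$ and $x=q(x,0,y)$ by \ref{space_0}--\ref{space_1}, then apply \ref{space_homo} to obtain $q(y,a,x)=q(q(x,1,y),a,q(x,0,y))=q(x,p(1,a,0),y)=q(x,\overline{a},y)$ by (\ref{def_complementar}). Item \ref{Y2} is the special case $a=\muu$ of \ref{Y1} together with (\ref{Bmuu}). Items \ref{Y3} and \ref{Y4} follow by the same trick: rewrite $x=q(x,0,y)$ in \ref{Y3} and $y=q(x,1,y)$ in \ref{Y4}, apply \ref{space_homo}, and recognise $p(0,a,b)=a\cdot b$ and $p(a,b,1)=a\circ b$ from Definition~\ref{binary_definition}. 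Item \ref{Y7} is literally \ref{space_homo} specialised to the middle entry $\muu$, combined with (\ref{def_star}). Item \ref{Y6} is obtained by applying \ref{Y3} to both sides and invoking (\ref{B130}). Item \ref{Y5} first rewrites $q(y,a,x)=q(x,\overline{a},y)$ by \ref{Y1}, then uses \ref{Y7} to collapse the left-hand side into $q(x,a\oplus\overline{a},y)$, and concludes by (\ref{B7}) together with the commutativity of the midpoint operation $\oplus$ on the mobi algebra.

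The only item which requires real work is \ref{Y8}, and that is the step I expect to be the main obstacle. My strategy is to reduce both sides to the form $q(x,-,y)$ and then compare the scalars. On the left, \ref{Y1} followed by \ref{Y3} converts $q(q(x,a,y),b,x)$ into $q(x,\overline{b}\cdot a,y)$, while \ref{Y3} directly converts $q(x,b,q(x,c,y))$ into $q(x,b\cdot c,y)$; one more application of \ref{Y7} then collapses the outer $\muu$-midpoint into $q(x,(\overline{b}\cdot a)\oplus(b\cdot c),y)$. On the right, \ref{Y3} gives $q(x,\muu,q(x,p(a,b,c),y))=q(x,\muu\cdot p(a,b,c),y)$. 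The two scalars coincide by the non-trivial mobi-algebra identity~(\ref{important}), which is precisely the content needed to bridge the gap.

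Item \ref{Y9} I would handle by letting $z=q(x,a,y)$; the hypothesis together with \ref{Y1} gives $z=q(x,a,y)=q(x,\overline{a},y)$, hence
$$z = q(z,\muu,z) = q(q(x,a,y),\muu,q(x,\overline{a},y)) = q(x,a\oplus\overline{a},y) = q(x,\muu,y),$$
using \ref{space_idem}, \ref{Y7}, and (\ref{B7}) (with commutativity of $\oplus$). Finally, \ref{Y10} is immediate from \ref{space_homo} and \ref{space_idem}: if $q(x,a,y)=q(x,b,y)=z$, then $q(x,p(a,t,b),y)=q(q(x,a,y),t,q(x,b,y))=q(z,t,z)=z$ for any $t\in A$. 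The overall pattern is that \ref{space_homo} functions as a dictionary between ternary operations on the space and on the algebra, so that every identity on geodesic points can be traded for an identity among scalars already proved in Section~\ref{sec_mobi_alg}.
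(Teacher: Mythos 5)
Your proposal is correct and follows essentially the same route as the paper's proof: each item is reduced via \ref{space_homo} to a scalar identity from Section~\ref{sec_mobi_alg}, with \ref{Y8} bridged by (\ref{important}) exactly as in the text. The one genuine (and welcome) divergence is \ref{Y4}, which you obtain in a single stroke from \ref{space_homo} with $c=1$ and $p(a,b,1)=a\circ b$, whereas the paper takes a longer detour through \ref{Y1}, \ref{Y3} and the identity $\overline{a\circ b}=\overline{b}\cdot\overline{a}$; your version is simpler and equally valid.
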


\begin{proof}
The following proof of \ref{Y1}, bearing in mind (\ref{def_complementar}), uses \ref{space_homo}, \ref{space_1} and \ref{space_0}:
\begin{eqnarray*}
q(x,\overline{a},y)&=&q(x,p(1,a,0),y)\\
                   &=&q(q(x,1,y),a,q(x,0,y))\\
                   &=&q(y,a,x).
\end{eqnarray*}
\ref{Y2} follows directly from \ref{alg_mu} and \ref{Y1}.
Beginning with (\ref{def_product}), \ref{Y3} is a consequence of \ref{space_homo} and \ref{space_0}:
\begin{eqnarray*}
q(x,a\cdot b,y)&=&q(x,p(0,a,b),y)\\
          &=&q(q(x,0,y),a,q(x,b,y))\\
          &=&q(x,a,q(x,b,y)).
\end{eqnarray*}
Considering (\ref{circ_cdot}), property \ref{Y4} follows from \ref{Y1} and \ref{Y3}.
\begin{eqnarray*}
q(q(x,a,y),b,y)&=&q(y,\overline{b},q(y,\overline{a},x))\\
               &=&q(y,\overline{b}\cdot\overline{a},x)\\
               &=&q(y,\overline{a\circ b},x)\\
               &=&q(x,a\circ b,y).
\end{eqnarray*}
Considering $(\ref{def_oplus})$, \ref{Y7} is just a particular case of \ref{space_homo}.
To prove \ref{Y6}, we use \ref{space_homo}, (\ref{B130}) and \ref{space_0}.
\begin{eqnarray*}
q(x,\muu,q(x,a,y))&=&q(q(x,0,y),\muu,q(x,a,y))\\
                 &=&q(x,p(0,\muu,a),y)\\
                 &=&q(x,p(0,a,\muu),y)\\
                 &=&q(q(x,0,y),a,q(x,\muu,y))\\
                 &=&q(x,a,q(x,\muu,y)).
\end{eqnarray*}
The following proof of \ref{Y5} is based on \ref{Y1}, \ref{space_homo} and (\ref{B7});
\begin{eqnarray*}
q(q(x,a,y),\muu,q(y,a,x))&=&q(q(x,a,y),\muu,q(x,\overline{a},y))\\
                        &=&q(x,p(a,\muu,\overline{a}),y)\\
                        &=&q(x,\muu,y).
\end{eqnarray*}
To prove \ref{Y8}, we start with the important property (\ref{important}) of the underlying mobi algebra and then get:
\begin{eqnarray*}
& &q(x,\muu \cdot p(a,b,c),y)=q(x,\overline{b}\cdot a\oplus b\cdot c,y)\\
&\Rightarrow& q(x,p(0,\muu,p(a,b,c)),y)=q(x,p(\overline{b}\cdot a, \muu,b\cdot c),y)\\
&\Rightarrow& q(x,\muu,q(x,p(a,b,c),y))=q(q(x,\overline{b}\cdot a,y),\muu,q(x,b\cdot c,y))\\
&\Rightarrow& q(x,\muu,q(x,p(a,b,c),y))\\
& &=q(q(x,\overline{b},q(x,a,y)),\muu,q(x,b,q(x,c,y))).
\end{eqnarray*}
It is easy to see that $\ref{Y9}$ is a direct consequence of $\ref{Y5}$, while $\ref{Y10}$ is a consequence of \ref{space_idem} and \ref{space_homo}. 
\end{proof}

In order to have an intuition on the strength of the axioms for a mobi-space, let us take a simple example with the variables $x,y\in\R$ and $t\in[0,1]$. First, let us define a map
\[q({x,t,y})=x\cos\left(t\right)+y\sin\left(t\right)\]
and observe that it satisfies $q(x,0,y)=x$ but not $q(x,1,y)=y$. If we put
\[q({x,t,y})=x\cos\left(t\frac{\pi}{2}\right)+y\sin\left(t\frac{\pi}{2}\right)\]
then we have $q(x,0,y)=x$ and $q(x,1,y)=y$ but axiom \ref{space_idem}, namely $q(x,t,x)=x$, fails for all values other than $x=0$ or $t\in\{0,1\}$.

If we change the map $q$ to be
\begin{equation}
q({x,t,y})=x\cos^2\left(t\frac{\pi}{2}\right)+y\sin^2\left(t\frac{\pi}{2}\right)\label{trying}
\end{equation}
then we get axioms \ref{space_idem} and \ref{space_cancel} but the axiom \ref{space_homo} is not verified. Take for example,  $x=1$, $y=0$, $r=t=\frac{1}{3}$ and $s=1$ and observe that 
\[q(x,r+t(s-r),y)=\cos^2\left(\frac{5\pi}{18}\right)\] while
\[q(q(x,r,y),t,q(x,s,y))=\cos^4\left(\frac{\pi}{6}\right)\] and they are not equal.

One might expect that in order to fix this problem it would be sufficient to find a map $\theta$ such that $$q(x,\theta(r+t(s-r)),y)=q(q(x,\theta(r),y),\theta(t),q(x,\theta(s),y)).$$
However, this is not so simple. Indeed, perhaps a first guess would be to consider the map $\theta(t)=\frac{2}{\pi}\arcsin(t)$. 
With this modification, (\ref{trying}) would become $$q(x,\theta(t),y)=x+(y-x)t^2.$$ 
This new formula, however, still does not satisfy  axiom \ref{space_homo}. 

There is, nevertheless, a general procedure that leads to a mobi space out of the formula $h(x,t,y)=x+(y-x)t^2$, but it involves some extra work. We have to introduce one extra dimension, while solving a certain system of equations (see Theorem \ref{thm:extra dim} below). 

First we solve the system of two equations
\begin{equation*}
\left\{\begin{array}{rcl}
A+(B-A)\,r^2&=&x\\
A+(B-A)\,s^2&=&y
\end{array}\right.
\end{equation*}
which has a unique solution for every $x,y\in\R,r,s\in\R^+$ and $s\neq r$, namely
\begin{equation}\label{alphabeta}
\begin{pmatrix}
A\\ B
\end{pmatrix}
=
\frac{1}{s^2-r^2}
\begin{pmatrix}
s^2 & -r^2\\ -(1-s^2) & 1-r^2
\end{pmatrix}
\begin{pmatrix}
x\\ y
\end{pmatrix}.
\end{equation}

The mobi space on the set $\R\times\R^+$ (over the unit interval) is thus given by the formula
$$
q((x,r),t,(y,s))=(h(A,r+t(s-r),B),r+t(s-r))
$$
with $s\neq r$ and $A$, $B$ obtained from equation $(\ref{alphabeta})$. When $s=r$ we put
$$
q((x,r),t,(y,s))=(x+t(y-x),r).
$$
We end up with the operation $q:(\R\times\R^+)\times[0,1]\times(\R\times\R^+)\to(\R\times\R^+)$ defined by
\begin{equation}\label{t2}
\begin{array}{l}
q\big((x,r),t,(y,s)\big)=\\[0.4mm]
\left\{\begin{array}{lcl}
\left(x+(y-x)\frac{2 r t+(s-r) t^2}{r+s}, r+t(s-r)\right)&, if& s\neq r\\[3mm]
\big(x+t (y-x),r\big)                                 &, if& s= r
\end{array}
\right.,
\end{array}
\end{equation}
turning $\left(\R\times\R^+,q\right)$ into a mobi space over the unit interval.
This procedure provides a way to construct examples of mobi spaces and it is detailed in the next theorem and generalized in Section \ref{sec6}. Further examples are given in the next section. 

\begin{theorem}\label{thm:extra dim}
Consider two real valued functions $f$ and $g$, of one variable. Let $I\subseteq\R$ be an interval of the real numbers such that for any $s,t\in I$, with $t\neq s$, the following condition holds:
\begin{equation}\label{matrix3.2}
f(s)\,g(t)\neq f(t)\,g(s).
\end{equation}
Let $V$ be a real vector space and $K\colon{I\to V}$ be any function.
Then, $(V\times I,q)$ is a mobi space over the mobi algebra $([0,1],p,0,\frac{1}{2},1)$, where  
$$q\colon{(V\times I)\times [0,1]\times (V\times I)\to V\times I}$$ 
is defined by the formula
\begin{equation}
q((x,s),a,(y,t))=(f(p)A+g(p)B-K(p),p),\ \textrm{when}\ t\neq s
\end{equation}
with $p\equiv p(s,a,t)=s+a(t-s)$ and $A,B\in V$ the unique solutions of the system of equations
\begin{equation*}
\left\{\begin{array}{rcl}
f(s)A+g(s)B&=&x+K(s)\\
f(t)A+g(t)B&=&y+K(t)
\end{array}\right.,
\end{equation*}
whereas 
\begin{equation}
q((x,s),a,(y,t))=(x+a(y-x),s), \ \textrm{when}\ t=s.
\end{equation}
\end{theorem}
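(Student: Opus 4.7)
My plan is to fix notation, dispatch the easy axioms \ref{space_0}, \ref{space_1}, \ref{space_idem} by direct substitution, and then reduce both \ref{space_cancel} and \ref{space_homo} to a single structural observation: when $s\neq t$, the coefficients $A,B$ obtained from the endpoints $(x,s),(y,t)$ are completely determined by the first coordinate of $q((x,s),a,(y,t))$ together with its second coordinate $p=s+a(t-s)$, via the linear relation $f(p)A+g(p)B=(\text{1st coord})+K(p)$. The hypothesis (\ref{matrix3.2}) ensures that any two distinct values of $p$ give an invertible $2\times 2$ system, which is what makes the whole argument work.

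For \ref{space_0} and \ref{space_1}, I plug $a=0$ (so $p=s$) and $a=1$ (so $p=t$) into $f(p)A+g(p)B-K(p)$ and use the defining system for $A,B$ to recover $x$ and $y$ respectively; the case $s=t$ is immediate from the second formula. Idempotence \ref{space_idem} is also immediate: the endpoints coincide, so the $s=t$ branch applies and yields $(x,s)$.

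For cancellation \ref{space_cancel}, equating the second coordinates of $q((x,s),\muu,(y_1,t_1))$ and $q((x,s),\muu,(y_2,t_2))$ forces $t_1=t_2=:t$. Assuming $s\neq t$, write $\Delta A=A_1-A_2$, $\Delta B=B_1-B_2$. The defining systems at $s$ (with the same $x$) give $f(s)\Delta A+g(s)\Delta B=0$, and equality of the first coordinates at $p=(s+t)/2$ gives $f(p)\Delta A+g(p)\Delta B=0$. Since $p\neq s$, hypothesis (\ref{matrix3.2}) makes this homogeneous $2\times 2$ system non-degenerate, forcing $\Delta A=\Delta B=0$; the equation at $t$ then gives $y_1=y_2$. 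The case $s=t_1=t_2$ follows directly from the second formula, and a mismatch between the two branches is impossible because the second coordinates would differ.

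The main obstacle is the homogeneity axiom \ref{space_homo}, which I treat by cases. Set $P_a=q((x,s),a,(y,t))$ and $P_c=q((x,s),c,(y,t))$, with second coordinates $p_a=s+a(t-s)$ and $p_c=s+c(t-s)$. In the generic subcase $s\neq t$ and $a\neq c$, the key observation is that the pair $(A',B')$ defined by the endpoints $P_a,P_c$ coincides with the original $(A,B)$: by construction the first coordinates of $P_a,P_c$ satisfy $(\text{1st coord of }P_a)+K(p_a)=f(p_a)A+g(p_a)B$ and similarly for $p_c$, so $(A,B)$ solves the system defining $(A',B')$, and uniqueness (again by (\ref{matrix3.2}), since $p_a\neq p_c$) yields $A'=A$, $B'=B$. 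A short computation gives $p_a+b(p_c-p_a)=s+p(a,b,c)(t-s)$, so both sides of \ref{space_homo} agree. The remaining subcases reduce quickly: if $a=c$ then $P_a=P_c$ and both sides equal $P_a$ by \ref{space_idem} together with $p(a,b,a)=a$; if $s=t$ then the $s=t$ branch of $q$ applies throughout and both sides unfold to $(x+p(a,b,c)(y-x),s)$.
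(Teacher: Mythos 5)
Your proof is correct and uses essentially the same mechanism as the paper's: the paper derives Theorem \ref{thm:extra dim} as a special case of Propositions \ref{General-Example} and \ref{Linear Example}, whose proofs of cancellation and of \ref{space_homo} rest on exactly your key observation that, by unique solvability of the $2\times 2$ system at any two distinct parameter values in $I$, the coefficient pair $(A,B)$ recomputed from intermediate points coincides with the original one. The only difference is presentational: you instantiate the argument directly in the linear setting rather than passing through the general statement about $h(\alpha,y,\beta)$ and the case split on $y_a=y_c$.
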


\noindent The proof will be given in Proposition \ref{Linear Example} from which Theorem \ref{thm:extra dim} is just a particular case. 

For the moment let us look at some examples and then compare the singular case of affine mobi spaces with R-modules.

\section{Examples}
In the following list of examples, unless otherwise stated, the underlying mobi algebra structure $(A, p, 0, \muu, 1)$ is the closed unit interval, i.e. $A=[0,1]$, the three constants are $0, \frac{1}{2}, 1$ and 
$$p(a,b,c)=(1-b)a+b c,\ \textrm{for all}\ a,b,c\in A.$$ We will refer to this algebra as the {\it canonical mobi algebra}. In each case, we present a set $X$ and a ternary operation $q(x,a,y)\in X$, for all \mbox{$x,y\in X$}, and $a \in A$, verifying the axioms of Definition \ref{mobi_space}. We also explain how it was obtained as an instance of some general construction.

\subsection{The canonical mobi space}\label{ex1to4}~\par
\begin{enumerate}
\item \label{ex1} Vector spaces provide examples. For instance:
\[X=\R^n\quad (n\in\mathbb{N})\]
and
\[q(x,a,y)=(1-a)x+a\,y.\]

\item \label{ex4} The well known technique of transporting the structure provides us with other ways of presenting the canonical structure. For every bijective map $F\colon{X\to X}$, with $X\subseteq \R^n$, we get a mobi space $(X,q)$ with
\begin{eqnarray*}
q(x,a,y)=F^{-1}((1-a)F(x)+a\,F(y)).
\end{eqnarray*}
For instance, in the case of dimension one:
\begin{enumerate}
\item \label{ex2} If $F(x)=\log x$ and $X=\mathbb{R}^{+}$ then we get the mobi space $(X,q)$, with
\[q(x,a,y)=x^{1-a} y^a.\]

\item \label{ex3} If $F(x)=\frac{1}{x}$, then $(\R^+,q)$ is a mobi space with
\[q(x,a,y)=\dfrac{x y}{a x+(1-a) y}.\]
\end{enumerate}
\end{enumerate}

\subsection{Examples obtained directly from Theorem \ref{thm:extra dim} }\label{ex5to9}~\par
To apply Theorem \ref{thm:extra dim}, we need three functions $f$, $g$ and $K$ such that $f(s)g(t)\neq f(t)g(s)$ for all $s,t\in I, s\neq t$. If $g$ is a non-zero constant function, this condition just imply the injectivity of $f$. Let us begin with $K=0$, $g=1$ and a function $f$ injective in $I$.

\begin{enumerate}

\item \label{ex6} With $f\colon{\R^+\to\R};\,x\mapsto x^2$, we obtain 
\[X=\mathbb{R}\times\mathbb{R}^+\]
with the formula

\[q((x,s),a,(y,t))=\left(x+(y-x)\frac{2\,s\,a+(t-s) a^2}{t+s},s+a(t-s)\right).\]
This is, in fact, the example displayed in equation (\ref{t2}).
Note that, since $r,s\in\R^+$, the second branch in (\ref{t2}) is not necessary.

\item \label{ex7} With $f\colon{\R^+\to\R};\,x\mapsto \frac{1}{x}$, we get the set
\[X=\mathbb{R}\times\mathbb{R}^+\]
with the formula

\[q((x,s),a,(y,t))=\left(x+(y-x)\frac{a\,t}{(1-a)s+a\,t},s+a(t-s)\right).\]

\item \label{ex8} With $f\colon{\R\to\R};\,x\mapsto x^3$, we can consider the set
\[X=\mathbb{R}^2\]
and the formula
$$
\begin{array}{l}
q((x,s),a,(y,t))=\\[3mm]
\left(x+(y-x)\dfrac{3\,s^2\,a+3\,s(t-s)a^2+(t-s)^2\,a^3}{s^2+s\,t+t^2},s+a(t-s)\right),
\end{array}
$$
if $(s,t)\neq (0,0)$, and
\[q((x,0),a,(y,0))=\left(x+a\,(y-x),0\right).\]

\item \label{ex9} In general, applying Theorem \ref{thm:extra dim} with $g=1$, $K=0$ and $f$ an injective real function of one variable, we get a mobi space in any set $X\subseteq\mathbb{R}^2$ for which the formula 
\begin{equation}\label{exgeneral}
\begin{array}{l}
q((x,s),a,(y,t))=\\[3mm]
\left(x+(y-x)\dfrac{f(s+(t-s) a)-f(s)}{f(t)-f(s)},s+a(t-s)\right),
\end{array}
\end{equation}
if $s\neq t$, and
\[q((x,s),a,(y,s))=\left(x+a\,(y-x),s\right),\]
defines a map $q\colon{X\times[0,1]\times X\to X}$.   
\end{enumerate}
At first glance, we could be skeptical that this operation $q$ verify even the simplest properties of a mobi space, like \ref{Y1}, for an arbitrary injective function $f$, but it does. Indeed, for $t\neq s$, (\ref{exgeneral}) imply  
$$\begin{array}{l}
q((x,s),1-a,(y,t))\\
=\left(x+(y-x)\dfrac{f(t+a(s-t))-f(s)}{f(t)-f(s)},t+a(s-t)\right)\\
=\left(x+(y-x)\dfrac{f(t+a(s-t))-f(t)+f(t)-f(s)}{f(t)-f(s)},t+a(s-t)\right)\\
=\left(y+(x-y)\dfrac{f(t+a(s-t))-f(t)}{f(s)-f(t)},t+a(s-t)\right)\\
=q((y,t),a,(x,s)).
\end{array}$$

We will now see some examples obtained from physics.

\subsection{Examples with physical interpretation}\label{physics}~\par
The following examples, from classical mechanics, can be viewed as an application of Theorem \ref{thm:extra dim} with specific expressions of $f$, $g$ and $K$.

\begin{enumerate}

\item \label{ex11}\label{projectiles} Consider a constant acceleration motion, with $x\in\R^n$, and the following position equation
$$ x(t)=x_0+v_0\,t-k\,t^2.$$
We can think, for instance, of a projectile motion in the plane $\R^2$ where $k$ would be $(0,\frac{g}{2})$ with $g$ being the gravitational acceleration near the Earth's surface. The constants $x_0$ and $v_0$ correspond to the usual initial conditions $x(0)=x_0$ and $x'(0)=v_0$. Trying to construct a mobi space out of this context, without extra dimension, we could think of imposing boundary conditions like $x(0)=x_0$ and $x(1)=x_1$, leading to:
$$x(t)=x_0+(x_1-x_0)\,t+k\,t\,(1-t).$$
But then the operation $q$ defined as $q(x_0,t,x_1)=x(t)$ is not a mobi operation: in particular, idempotency $q(x,t,x)=x$ is not verified because a body could go up vertically and then down back to the same place; axiom \ref{space_homo} is not verified either. The way to go is to let the variable $t$ flow freely in an extra dimension with boundary conditions like $x(t_1)=x_1$ and $x(t_2)=x_2$. These conditions lead to:
$$\qquad x(t_1+a(t_2-t_1))=x_1+a(x_2-x_1)+k\,a\, (1-a) (t_2-t_1)^2. $$
In the scope of Theorem \ref{thm:extra dim}, we could say that $f(t)=t$, $g(t)=1$ and $K(t)=k\,t^2$.
For any $k\in\R^n$, we have then a mobi space $(X,q)$ over the canonical mobi algebra by taking the set
\[X=\mathbb{R}^{n+1}\]
with the formula
\begin{equation}\label{eqprojectiles}
\begin{array}{l}
q((x,s),a,(y,t))=\\[0.3mm]
\left(x+a (y-x)+k\, a\, (1-a)(t-s)^2, s+a\,(t-s)\right).
\end{array}
\end{equation}

Remark: 
This example could be generalized to Special Relativity \cite{relativistic_projectiles}. However, the operation $q$ is then a partial operation because, in Minkowski space-time, not every two points can be reached from one another if one point is not inside the {\it light cone} of the other. 
 
\item \label{ex:fxdot}
The solutions for the one-dimension motion of the well-known damped harmonic oscillator are of the form
$$ A f(t)+B g(t)-K(t),$$
where A and B are real parameters. If the oscillator is not driven, K(t)=0. Depending on the circumstances, we can have:
$$\qquad
\begin{array}{ll}
\textrm{overdamping}: f(t)=e^{\alpha\,t},\ g(t)=e^{\beta\,t},\ \alpha\neq\beta\\
\textrm{critical damping}: f(t)=e^{\alpha\,t},\ g(t)=t\,e^{\alpha\,t}\\
\textrm{underdamping}:f(t)=e^{\alpha\,t}\,\sin(\beta\,t),\ g(t)=e^{\alpha\,t}\,\cos(\beta\,t),\ \beta\neq 0,
\end{array}
$$
where $\alpha,\beta\in\R$ depend on the oscillatory system. For the first two cases, the condition (\ref{matrix3.2}) is verified for all  $s,t\in \R$, with $t\neq s$ and therefore we can apply \mbox{Theorem \ref{thm:extra dim}}. 
\begin{enumerate}
\item\label{criticaldamping} In the critical damping case and for any $\alpha\in\R$, we obtain the following mobi space $(\mathbb{R}^2,q)$ over the canonical mobi algebra
with the formula
$$
\begin{array}{l}
q((x,s),a,(y,t))=\\[3mm]
\left((1-a)\,x\,e^{\alpha a (t-s)}+a\, y\,e^{\alpha (1-a) (s-t)}, s+a(t-s)\right).
\end{array}$$
\item\label{overdamping}
In the overdamping case and for any $\alpha,\beta\in\R, \alpha\neq\beta$, we obtain the mobi space $(\mathbb{R}^2,q)$ over the canonical mobi algebra where q is defined, for $t\neq s$, by
$$
\begin{array}{l}
\qquad q((x,s),a,(y,t))=
\Big(\dfrac{e^{\alpha(1-a)(s-t)}-e^{\beta(1-a)(s-t)}}{e^{\alpha s+\beta t}-e^{\alpha t+\beta s}}\,e^{(\alpha+\beta)t}\,x\\[3mm]
\qquad+\dfrac{e^{\beta a (t-s)}-e^{\alpha a (t-s)}}{e^{\alpha s+\beta t}-e^{\alpha t+\beta s}}\,e^{(\alpha+\beta)s}\,y
, s+a(t-s)\Big),
\end{array}$$
and, for $t=s$, by $q((x,s),a,(y,s))=(x+a(y-x),s)$. 
\item\label{underdamping} For the case of underdamping, we can still apply Theorem~\ref{thm:extra dim} if we restrict the possible values of $s$ and $t$ to, for instance, $I=[0,\pi[$. Here, we just mention that the case where $f(t)=\sin(\beta t)$ and $g(t)=\cos(\beta t)$ is analysed in Section \ref{sec.Slerp}.
\end{enumerate}
It is interesting to note that example (\ref{criticaldamping}) can be obtained from example (\ref{overdamping}) in the limit situation $\beta\to\alpha$.

\subsection{An example over a different mobi algebra}\label{ex12}\label{ex_lozenge}~\par
So far we have considered examples of mobi spaces over the unit interval. Here is an example with a different mobi algebra.
For the mobi algebra $(A,p,0,\muu,1)$ let us use
 $$A=\left\{(t_1,t_2)\in\mathbb{R}^2\colon \vert t_2\vert \leq t_1 \leq 1-\vert t_2\vert\right\}$$
$$\muu=\left(\frac{1}{2},0\right)\,;\,1=(1,0)\,;\,0=(0,0)$$
\begin{eqnarray*}
p(a,b,c)=(a_1-b_1 a_1-b_2 a_2+b_1 c_1+b_2 c_2,\\
          a_2-b_1 a_2-b_2 a_1+b_1 c_2+b_2 c_1).
\end{eqnarray*}
And for the mobi space $(X,q)$:
$X=[0,1]$ and, with $h=\pm 1$, $$q(x,(t,s),y)= (1-t-h\,s) x+(t+h\,s)y.$$

\end{enumerate}

\subsection{Affineness of the examples}\label{affineness}~\par
We end this section with 
some comments on whether the examples presented verify the affine condition (\ref{affine}) or not.
Examples like those corresponding to example \ref{ex5to9}(\ref{ex9}) are, in general, not affine in the sense that they don't verify 
$$
q\left(q[(x_1,s_1),a,(y_1,t_1)],\frac{1}{2},q[(x_2,s_2),a,(y_2,t_2)]\right)$$
$$=q\left(q[(x_1,s_1),\frac{1}{2},(x_2,s_2)],a,q[(y_1,t_1),\frac{1}{2},(y_2,t_2)]\right).
$$
Indeed, in example \ref{ex5to9}(\ref{ex6}) for instance, we have that
$$q\left(q[(0,0),\frac{1}{3},(0,1)],\frac{1}{2},q[(1,1),\frac{1}{3},(0,0)]\right)=\left(\frac{5}{27},\frac{1}{2}\right)$$
but
$$q\left(q[(0,0),\frac{1}{2},(1,1)],\frac{1}{3},q[(0,1),\frac{1}{2},(0,0)]\right)=\left(\frac{1}{6},\frac{1}{2}\right).$$
Similarly, in example \ref{ex5to9}(\ref{ex8}), we have for instance:
$$q\left(q[(0,0),\frac{1}{3},(0,1)],\frac{1}{2},q[(1,1),\frac{1}{3},(0,0)]\right)=\left(\frac{19}{189},\frac{1}{2}\right)$$
while
$$q\left(q[(0,0),\frac{1}{2},(1,1)],\frac{1}{3},q[(0,1),\frac{1}{2},(0,0)]\right)=\left(\frac{1}{12},\frac{1}{2}\right).$$
O course, the canonical mobi spaces \ref{ex1to4} are affine. Examples \ref{ex5to9}(\ref{ex7}), \ref{physics}(\ref{projectiles}) and \ref{physics}(\ref{criticaldamping}) correspond also to affine mobi spaces while \ref{physics}(\ref{overdamping}) does not. Indeed, for \ref{physics}(\ref{overdamping}), we have for instance that
$$\begin{array}{l}
q\left(q[(0,0),\frac{1}{3},(0,1)],\frac{1}{6},q[(1,1),\frac{1}{3},(0,0)]\right)\\[5mm]
=\left(\dfrac{(e^{\alpha/18}-e^{\beta/18})(e^{\alpha/3}+e^{\beta/3})}{e^\alpha-e^\beta},\frac{7}{18}\right)
\end{array}$$
but
$$\begin{array}{l}
q\left(q[(0,0),\frac{1}{6},(1,1)],\frac{1}{3},q[(0,1),\frac{1}{6},(0,0)]\right)\\[5mm]
=\left(e^{2(\alpha+\beta)/3}\dfrac{(e^{-4\alpha/9}-e^{-4\beta/9})(e^{\beta/6}-e^{\alpha/6})}{(e^{2\alpha/3}-e^{2\beta/3})(e^\alpha-e^\beta)},\frac{7}{18}\right).
\end{array}$$
The two results are different if $\alpha\neq \beta$. However, in the limit situation when $\beta\to\alpha$, the critical case is recovered and the two results are naturally equal. The example \ref{physics}(\ref{underdamping}) is not affine either.

In the next section, we compare affine mobi spaces \cite{preprint} with modules over a ring with one-half.

\section{Comparison  with R-modules}\label{sec:modules}

Consider a unitary ring $(R,+,\cdot,0,1)$. It has been proven \cite{mobi} that if $R$ contains the inverse of $1+1$, then it is a mobi algebra and if a mobi algebra $(A,p,0,\muu,1)$ contains the inverse of $\muu$, in the sense of (\ref{def_product}), then it is a unitary ring. In this section, we will compare a module over a ring $R$ with a mobi space over a mobi algebra A. First, let us just recall that a module over a ring $R$ is a system $(M,+,e,\varphi)$, where $\varphi:R\to \End(M)$ is a map from $R$ to the usual ring of endomorphisms, such that $(M,+,e)$ is an abelian group and $\varphi$ is a ring homomorphism.

The following theorem shows how to construct a mobi space from a module over a ring containing the inverse of $2$.
\begin{theorem}\label{module2mobi}
Consider a module $(X,+,e,\varphi)$ over a unitary ring $(A,+,\cdot,0,1)$. If $A$ contains $(1+1)^{-1}=\muu$ then $(X,q)$ is an affine mobi space over the mobi algebra $(A,p,0,\muu,1)$, with
\begin{eqnarray}
p(a,b,c)&=&a+b c-b a\\
q(x,a,y)&=&\varphi_{1-a}(x)+\varphi_a(y).\label{q}
\end{eqnarray}
\end{theorem}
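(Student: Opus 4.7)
The plan is to verify, in turn, that the given $p$ makes $A$ a mobi algebra, that $q$ satisfies the five axioms \ref{space_0}--\ref{space_homo}, and finally that $q$ satisfies the affine condition (\ref{affine}). The first point can be cited from \cite{mobi}, since the statement already notes that a unitary ring containing $\muu=(1+1)^{-1}$ is a mobi algebra with $p(a,b,c)=a+bc-ba$. So the real content is to check \ref{space_0}--\ref{space_homo} and affineness for $q(x,a,y)=\varphi_{1-a}(x)+\varphi_a(y)$.

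Throughout, I would exploit the standard module identities that follow from $\varphi$ being a ring homomorphism into $\End(X)$: namely $\varphi_0=0$ (the zero endomorphism), $\varphi_1=\mathrm{id}_X$, $\varphi_{a+b}=\varphi_a+\varphi_b$, and $\varphi_{ab}=\varphi_a\circ\varphi_b$. Axioms \ref{space_0} and \ref{space_1} are then immediate since $q(x,0,y)=\varphi_1(x)+\varphi_0(y)=x+e=x$ and symmetrically for $a=1$. Axiom \ref{space_idem} follows from $\varphi_{1-a}(x)+\varphi_a(x)=\varphi_{(1-a)+a}(x)=\varphi_1(x)=x$. For the cancellation axiom \ref{space_cancel}, after removing the common $\varphi_{\muu}(x)$ from both sides one is left with $\varphi_{\muu}(y_1)=\varphi_{\muu}(y_2)$, and adding each side to itself gives $\varphi_{\muu+\muu}(y_1)=\varphi_1(y_1)=y_1$, with the same conclusion for $y_2$, so $y_1=y_2$.

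The key computation is \ref{space_homo}. Expanding $q(q(x,a,y),b,q(x,c,y))$ by linearity gives
\begin{equation*}
\varphi_{(1-b)(1-a)+b(1-c)}(x)+\varphi_{(1-b)a+bc}(y),
\end{equation*}
and a short algebraic simplification shows that the exponent on $x$ equals $1-(a+bc-ba)=1-p(a,b,c)$ while the exponent on $y$ equals $a+bc-ba=p(a,b,c)$, so the whole thing collapses to $q(x,p(a,b,c),y)$ as required. For the affine condition (\ref{affine}), expanding both sides by the same linearity trick produces, on each side, a sum of four terms with coefficients involving products of $\muu$ with $a$ and $1-a$. The two sides coincide provided $\muu$ commutes with $a$ (and hence with $1-a$); this is automatic because $\muu$ is the multiplicative inverse of the central element $1+1$, so it lies in the centre of $A$.

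The main conceptual obstacle, such as there is one, is precisely this noncommutativity issue in the ring $A$: \ref{space_homo} requires one to match $p(a,b,c)=a+bc-ba$ (not $a+bc-ab$), while the affine axiom requires $\muu$ to commute with $a$. Once the centrality of $\muu$ is noted, the rest is routine bookkeeping. I would present the computations in two short displayed calculations (one for \ref{space_homo}, one for (\ref{affine})) and leave \ref{space_0}--\ref{space_cancel} as one-line verifications.
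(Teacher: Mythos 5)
Your proposal is correct and follows essentially the same route as the paper's proof: cite \cite{mobi} for the mobi algebra structure on $A$, verify \ref{space_0}--\ref{space_homo} by the same direct computations (including the doubling trick $\varphi_\muu(y_i)+\varphi_\muu(y_i)=\varphi_1(y_i)$ for \ref{space_cancel} and the exponent bookkeeping $(1-b)(1-a)+b(1-c)=1-p(a,b,c)$ for \ref{space_homo}), and expand both sides of (\ref{affine}) using additivity and multiplicativity of $\varphi$. Your explicit observation that $\muu$ is central---needed to commute $\varphi_\muu$ past $\varphi_{1-a}$ in the affine computation, and used silently in the paper's chain of equalities---is a worthwhile point of extra care rather than a different method.
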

\begin{proof}
$(A,p,0,\muu,1)$ is a mobi algebra by Theorem 7.2 of \cite{mobi}. We show here that the axioms of Definition \ref{mobi_space}, as well as (\ref{affine}), are verified. The first three axioms are easily proved:
\begin{eqnarray*}
q(x,0,y)&=&\varphi_1(x)+\varphi_0(y)=x+ e=x\\
q(x,1,y)&=&\varphi_0(x)+\varphi_1(y)=e+ y=y\\
q(x,a,x)&=&\varphi_{1-a}(x)+\varphi_a(x)=\varphi_{1-a+a}(x)=\varphi_1(x)=x.
\end{eqnarray*}
Axiom \ref{space_cancel} is due to the fact that $\muu+\muu=1$ and consequently
\begin{eqnarray*}
\varphi_\muu(y_1)=\varphi_\muu(y_2)&\Rightarrow& \varphi_\muu(y_1)+\varphi_\muu(y_1)=\varphi_\muu(y_2)+\varphi_\muu(y_2)\\                                   &\Rightarrow& \varphi_1(y_1)=\varphi_1(y_2)\Rightarrow y_1=y_2.
\end{eqnarray*}
Next, we give a proof of Axiom \ref{space_homo}. It is relevant to notice that, besides other evident properties of the module $X$, the associativity of~$+$ plays an important part in the proof:
\begin{eqnarray*}
&&q(q(x,a,y),b,q(x,c,y))\\
&=&\varphi_{1-b}(\varphi_{1-a}(x)+\varphi_a(y))+\varphi_b(\varphi_{1-c}(x)+\varphi_c(y))\\
&=&\varphi_{1-b}(\varphi_{1-a}(x))+\varphi_{1-b}(\varphi_a(y))+\varphi_b(\varphi_{1-c}(x))+\varphi_b(\varphi_c(y))\\
&=&\varphi_{(1-b)(1-a)}(x)+\varphi_{b(1-c)}(x)+\varphi_{(1-b)a}(y)+\varphi_{bc}(y)\\
&=&\varphi_{1-a+ba-bc}(x)+\varphi_{a-ba+bc}(y)\\
&=&\varphi_{1-p(a,b,c)}(x)+\varphi_{p(a,b,c)}(y)\\
&=&q(x,p(a,b,c),y).
\end{eqnarray*}
It remains to prove (\ref{affine}):
\begin{eqnarray*}
&&q(q(x_1,a,y_1),\muu,q(x_2,a,y_2))\\
&=&\varphi_{\muu}(\varphi_{1-a}(x_1)+\varphi_a(y_1))+\varphi_\muu(\varphi_{1-a}(x_2)+\varphi_a(y_2))\\
&=&\varphi_{\muu}(\varphi_{1-a}(x_1)+\varphi_a(y_1)+\varphi_{(1-a)}(x_2)+\varphi_a(y_2))\\
&=&\varphi_{\muu}(\varphi_{1-a}(x_1+ x_2)+\varphi_a(y_1+ y_2))\\
&=&\varphi_{(1-a)\muu}(x_1+ x_2)+\varphi_{a\muu}(y_1+ y_2)\\
&=&\varphi_{(1-a)}(\varphi_\muu(x_1)+\varphi_\muu(x_2))+\varphi_a(\varphi_{\muu}(y_1)+\varphi_{\muu}(y_2))\\
&=&\varphi_{(1-a)}(q(x_1,\muu,x_2))+\varphi_{a}(q(y_1,\muu,y_2))\\
&=&q(q(x_1,\muu,x_2)),a,q(y_1,\muu,y_2)).
\end{eqnarray*}
\end{proof}

\begin{theorem}\label{mobi2module}
Consider an affine mobi space $(X,q)$, with a fixed chosen element $e\in X$, over a mobi algebra $(A,p,0,\muu,1)$. If $A$ contains $2$ such that $p(0,\muu,2)=1$ then $(X,+,e,\varphi)$ is a module over the unitary ring $(A,+,\cdot,0,1)$, with
\begin{eqnarray}
a+b&=&p(0,2,p(a,\muu,b))\\
a\cdot b&=&p(0,a,b)\\
\varphi_a(x)&=&q(e,a,x)\\
x+ y&=&q(e,2,q(x,\muu,y))=\varphi_2(q(x,\muu,y)).
\end{eqnarray}
\end{theorem}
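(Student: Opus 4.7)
The plan is to treat the construction in three stages. Stage one is to import the ring $(A,+,\cdot,0,1)$ from Theorem~7.2 of \cite{mobi}; stage two is to manufacture the abelian-group structure on $(X,+,e)$ from the midpoint algebra $(X,\oplus)$ with $x\oplus y:=q(x,\muu,y)$; stage three is to check that $\varphi_a(x)=q(e,a,x)$ is a unital ring homomorphism into $\End(X)$. Throughout, a pivotal remark is that (\ref{B130}) makes $\muu$ central in the ring $A$, so that its inverse $2$ is also central: $\muu\cdot 2=2\cdot\muu=1$. The key organizing identity is that $F(u):=q(e,2,u)$ is inverse to the halving map $u\mapsto e\oplus u$ (axiom~\ref{Y3} applied with $\muu\cdot 2=1$), so $x+y=F(x\oplus y)$ is characterized by $e\oplus(x+y)=x\oplus y$.

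For the abelian-group stage, $(X,\oplus)$ is a midpoint algebra: idempotency, commutativity, cancellation, and mediality come from \ref{space_idem}, \ref{Y2}, \ref{space_cancel}, and (\ref{affine}) at $a=\muu$ respectively. Commutativity of $+$ is then \ref{Y2}, and $e+x=q(e,2\cdot\muu,x)=x$ follows from \ref{Y3}. I would set $-x:=q(x,2,e)$ and apply \ref{Y3} to obtain $x\oplus(-x)=q(x,\muu\cdot 2,e)=q(x,1,e)=e$, whence $x+(-x)=F(e)=e$ by \ref{space_idem}. Associativity is purely midpoint-algebraic: mediality together with $e\oplus(x+y)=x\oplus y$ gives
\begin{equation*}
e\oplus((x+y)\oplus z)=(x\oplus y)\oplus(e\oplus z)=(x\oplus e)\oplus(y\oplus z),
\end{equation*}
and the analogous computation on the right yields $e\oplus(x\oplus(y+z))=(x\oplus e)\oplus(y\oplus z)$, so midpoint cancellation forces $(x+y)\oplus z=x\oplus(y+z)$, and applying $F$ delivers associativity of $+$.

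For the ring-homomorphism stage, $\varphi_0(x)=e$, $\varphi_1(x)=x$, and $\varphi_a(e)=e$ are \ref{space_0}, \ref{space_1}, and \ref{space_idem}. Multiplicativity $\varphi_{a\cdot b}=\varphi_a\circ\varphi_b$ is an immediate application of \ref{Y3}. For $\varphi_{a+b}=\varphi_a+\varphi_b$, the formula $a+b=2\cdot p(a,\muu,b)$ combined with \ref{Y3} and \ref{space_homo} gives
\begin{equation*}
\varphi_{a+b}(x)=q(e,2,q(e,p(a,\muu,b),x))=q(e,2,q(e,a,x)\oplus q(e,b,x))=\varphi_a(x)+\varphi_b(x).
\end{equation*}
The main obstacle, and the only step that genuinely requires the affine axiom (\ref{affine}), is checking that each $\varphi_a$ is a group endomorphism. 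Applying (\ref{affine}) with outer scalar $\muu$, inner scalar $a$, and $x_1=x_2=e$, together with $e\oplus e=e$, yields $q(e,a,x)\oplus q(e,a,y)=q(e,a,x\oplus y)$. Then \ref{Y3} on each side of the desired equality gives $\varphi_a(x+y)=q(e,a\cdot 2,x\oplus y)$ and $\varphi_a(x)+\varphi_a(y)=q(e,2\cdot a,x\oplus y)$, which agree precisely because $2$ is central in $A$.
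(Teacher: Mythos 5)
Your proof is correct and follows the same skeleton as the paper's: characterize $+$ by $e\oplus(x+y)=x\oplus y$, derive associativity from mediality (the affine axiom at $a=\muu$), derive additivity of each $\varphi_a$ from the affine axiom with $x_1=x_2=e$, and derive multiplicativity from \ref{Y3}. The differences are organizational rather than substantive. Where you invert the halving map explicitly via $F=q(e,2,-)$ and compute inside the midpoint algebra before applying $F$, the paper instead applies $q(e,\muu,-)$ to both sides of each desired identity and concludes by \ref{space_cancel}; the two devices are interchangeable. Your inverse $-x=q(x,2,e)$ is literally the paper's $q(e,p(1,2,0),x)$ by \ref{Y1}, and your verification of it is shorter. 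The one place your bookkeeping costs something extra is the endomorphism check: you reduce $\varphi_a(x+y)=\varphi_a(x)+\varphi_a(y)$ to the identity $a\cdot 2=2\cdot a$, which does hold --- $\muu$ is central by (\ref{B130}), so its inverse $2$ is central too, using associativity of the imported ring multiplication and cancellation (\ref{cancell-muu}) --- but the paper's cancellation-based formulation never needs this fact; it would be worth spelling that small argument out. Finally, a citation detail: the ring structure on $A$ comes from Theorem 7.1 of \cite{mobi}, while Theorem 7.2 is the converse direction used in Theorem \ref{module2mobi}.
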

\begin{proof}
$(A,+,\cdot,0,1)$ is a unitary ring by Theorem 7.1 of \cite{mobi}. We prove here that $(X,+,e,\varphi)$ is a module over $A$. First, we observe that, using in particular \ref{Y3} of Proposition \ref{properties_space}, we have:
\begin{eqnarray*}
q(e,\muu,x+y)&=& q(e,\muu,q(e,2,q(x,\muu,y)))\\
&=& q(e,\muu\cdot 2,q(x,\muu,y))\\
                            &=& q(e,1,q(x,\muu,y))\\
														&=& q(x,\muu,y).
\end{eqnarray*}
Then, the property (\ref{affine}) of an affine mobi space is essential to prove the associativity of the operation $+$ of the module:
\begin{eqnarray*}
q(e,\muu,q(e,\muu,(x+ y)+ z))&=&q(q(e,\muu,e),\muu,q(x+ y,\muu,z))\\
&=& q(q(e,\muu,x+ y),\muu,q(e,\muu,z))\\
&=& q(q(x,\muu,y),\muu,q(e,\muu,z))\\
&=& q(q(x,\muu,e),\muu,q(y,\muu,z))\\
&=& q(q(e,\muu,x),\muu,q(e,\muu,y+ z))\\
&=& q(q(e,\muu,e),\muu,q(x,\muu,y+ z))\\
&=& q(e,\muu,q(e,\muu,x+(y+ z)))\\
\end{eqnarray*}
Which, by \ref{space_cancel}, implies that $(x+ y)+ z=x+(y+ z)$. Commutativity of $+$ and the identity nature of $e$ are easily proved:
\begin{eqnarray*}
q(e,\muu,e+ x)&=&q(e,\muu,x)\Rightarrow e+ x=x\\
q(e,\muu,x+ y)&=&q(x,\muu,y)=q(y,\muu,x)\\
                    &=&q(e,\muu,y+ x)\Rightarrow x+ y=y+ x.
\end{eqnarray*}
Cancellation is achieved with $-x=q(e,p(1,2,0),x)$. Indeed:
\begin{eqnarray*}
q(e,\muu,q(e,p(1,2,0),x)+ x)&=&q(q(e,p(1,2,0),x),\muu,x)\\
                                  &=&q(q(e,p(1,2,0),x),\muu,q(e,1,x))\\
                                  &=&q(e,p(p(1,2,0),\muu,1),x)\\
                                  &=&q(e,p(1,p(2,\muu,0),0),x)\\
                                  &=&q(e,p(1,1,0),x)\\
                                  &=&q(e,0,x)=e=q(e,\muu,e)\\
\end{eqnarray*}
To prove that $\varphi_a(x+y)=\varphi_a(x)+\varphi_a(y)$, we will again need (\ref{affine}):
\begin{eqnarray*}
q(e,\muu,\varphi_a(x+y))&=& q(e,\muu,q(e,a,x+ y))\\    
														   &=& q(q(e,a,e),\muu,q(e,a,x+ y))\\   
														   &=& q(q(e,\muu,e),a,q(e,\muu,x+ y))\\   
														   &=& q(e,a,q(x,\muu,y))\\ 
														   &=& q(q(e,a,x),\muu,q(e,a,y))\\  
														   &=& q(\varphi_a(x),\muu,\varphi_a(y))\\ 
														   &=& q(e,\muu,\varphi_a(x)+\varphi_a(y)).
\end{eqnarray*}
To prove that $\varphi_{a+b}(x)=\varphi_a(x)+\varphi_b(x)$, let us first recall that, in a mobi algebra with 2 and $a+b=p(0,2,p(a,\muu,b))$, we have the following property:
$$p(0,\muu,a+b)=p(a,\muu,b).$$
We then have
\begin{eqnarray*}
q(e,\muu,\varphi_{a+b}(x))&=& q(e,\muu,q(e,a+b,x))\\    
													&=& q(q(e,0,x),\muu,q(e,a+b,x))\\    
													&=& q(e,p(0,\muu,a+b),x)\\    
													&=& q(e,p(a,\muu,b),x)\\    
													&=& q(q(e,a,x),\muu,q(e,b,x))\\    
													&=& q(\varphi_a(x),\muu,\varphi_b(x))\\
													&=& q(e,\muu,\varphi_a(x)+\varphi_b(x)).
\end{eqnarray*}
The last two properties are easily proved:
$$\varphi_{a\cdot b}(x)=q(e,a\cdot b,x)=q(e,a,q(e,b,x))=\varphi_a(\varphi_b(x))$$
$$\varphi_1(x)=q(e,1,x)=x.$$
\end{proof}

\begin{proposition}\label{module2module}
Consider a R-module $(X,+,e,\varphi)$ within the conditions of Theorem \ref{module2mobi} and the corresponding mobi space $(X,q)$. Then the R-module obtained from $(X,q)$ by Theorem \ref{mobi2module} is the same as $(X,+,e,\varphi)$.
\end{proposition}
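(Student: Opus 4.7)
My plan is to verify, component-by-component, that the module produced by feeding the mobi space $(X,q)$ of Theorem \ref{module2mobi} into Theorem \ref{mobi2module} coincides with the starting module $(X,+,e,\varphi)$. The underlying set and the chosen basepoint $e$ agree by construction, and the ring operations recovered from $p$ via $a\cdot' b=p(0,a,b)$ and $a+'b=p(0,2,p(a,\muu,b))$ reduce, by direct substitution of $p(a,b,c)=a+bc-ba$ together with the identities $1-\muu=\muu$ and $2\muu=1$, to the original ring operations of $R$. So the real content is two equalities on $X$: the scalar action and the addition.

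First I would check the action. By Theorem \ref{mobi2module} the recovered action is $\varphi'_a(x)=q(e,a,x)$, and by the formula for $q$ in Theorem \ref{module2mobi} this equals $\varphi_{1-a}(e)+\varphi_a(x)$. Since $\varphi_{1-a}$ is a group endomorphism of $(X,+,e)$ it fixes the neutral element, so the first summand is $e$ and we conclude $\varphi'_a(x)=\varphi_a(x)$.

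Next I would check the addition. The new sum is $x+'y=q(e,2,q(x,\muu,y))$. Using $1-\muu=\muu$ together with additivity of $\varphi_\muu$, the inner term simplifies to $\varphi_\muu(x)+\varphi_\muu(y)=\varphi_\muu(x+y)$. Unfolding the outer $q$ then gives $\varphi_{1-2}(e)+\varphi_2(\varphi_\muu(x+y))$; since $\varphi_{1-2}$ sends $e$ to $e$ and $\varphi_2\circ\varphi_\muu=\varphi_{2\muu}=\varphi_1=\mathrm{id}_X$, the whole expression collapses to $x+y$.

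I do not expect any real obstacle: the statement is essentially a bookkeeping check that the two constructions are mutually inverse on the module side. The one point that deserves care is the systematic use of the fact that each $\varphi_a$ is a group endomorphism (so sends $e$ to $e$ and commutes with $+$) together with the ring identities $1-\muu=\muu$ and $2\muu=1$, both of which hold in any unitary ring containing $(1+1)^{-1}$.
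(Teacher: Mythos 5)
Your proposal is correct and follows essentially the same route as the paper: unfold $q(e,a,x)=\varphi_{1-a}(e)+\varphi_a(x)$ and use that endomorphisms fix $e$, then unfold $x+'y=q(e,2,q(x,\mu,y))$ and collapse it via $\varphi_2\circ\varphi_\mu=\varphi_{2\mu}=\varphi_1=\mathrm{id}$ (the paper distributes $\varphi_2$ over $\varphi_\mu(x)+\varphi_\mu(y)$ rather than first combining into $\varphi_\mu(x+y)$, a trivial difference). Your extra remark that the recovered ring operations on $R$ agree is a harmless addition not spelled out in the paper's proof.
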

\begin{proof}
From $(X,q)$, we define
$$x+'y=q(e,2,q(x,\muu,y))\ \textrm{and}\ \varphi_a'(x)=q(e,a,x)$$
and obtain the following equalities:
\begin{eqnarray*}
x+'y&=&e+\varphi_2(q(x,\muu,y))\\
    &=&\varphi_2(\varphi_\muu(x)+\varphi_\muu(y))\\
		&=&\varphi_{2\cdot\muu}(x)+\varphi_{2\cdot\muu}(y)\\
		&=&x+y
\end{eqnarray*}
$$\varphi'_a(x)=\varphi_{1-a}(e)+\varphi_a(x)=e+\varphi_a(x)=\varphi_a(x).$$
\end{proof}

\begin{proposition}\label{mobi2mobi}
Consider an affine mobi space $(X,q)$ within the conditions of Theorem \ref{mobi2module} and the corresponding module $(X,+,e,\varphi)$. Then the affine mobi space obtained from $(X,+,e,\varphi)$ by Theorem \ref{module2mobi} is the same as $(X,q)$.
\end{proposition}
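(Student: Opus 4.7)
The plan is to prove equality by halving: I will show that $q(e,\muu,q'(x,a,y))=q(e,\muu,q(x,a,y))$ and then invoke cancellation \ref{space_cancel}. Here $q'$ denotes the mobi operation produced by Theorem \ref{module2mobi} applied to the module $(X,+,e,\varphi)$ of Theorem \ref{mobi2module}. This detour through the midpoint is convenient because the identity $q(e,\muu,u+v)=q(u,\muu,v)$, already established in the first lines of the proof of Theorem \ref{mobi2module}, turns the module addition occurring inside $q'$ directly into a midpoint in $(X,q)$.

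Unpacking the definitions, $q'(x,a,y)=\varphi_{1-a}(x)+\varphi_a(y)=q(e,1-a,x)+q(e,a,y)$, and the above identity yields
\[
q(e,\muu,q'(x,a,y))=q(q(e,1-a,x),\muu,q(e,a,y)).
\]
The ring element $1-a$ coincides with the mobi-algebra complement $\overline{a}=p(1,a,0)$, so by \ref{Y1} we have $q(e,1-a,x)=q(e,\overline{a},x)=q(x,a,e)$. It therefore remains to show
\[
q(q(x,a,e),\muu,q(e,a,y))=q(e,\muu,q(x,a,y)).
\]

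The affine axiom (\ref{affine}) applied with $(x_1,y_1,x_2,y_2)=(x,e,e,y)$ produces
\[
q(q(x,a,e),\muu,q(e,a,y))=q(q(x,\muu,e),a,q(e,\muu,y)).
\]
Using \ref{Y2} to swap $q(x,\muu,e)=q(e,\muu,x)$, a second application of (\ref{affine}), this time read from right to left with $(x_1,y_1,x_2,y_2)=(e,e,x,y)$ and the idempotency identity $q(e,a,e)=e$ from \ref{space_idem}, collapses the expression to $q(e,\muu,q(x,a,y))$. Cancellation \ref{space_cancel} then concludes the proof.

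The only delicate point is the double use of (\ref{affine}): because the mobi space is only assumed affine at $\muu$, one must first push the $\muu$ inside to swap its role with $a$, simplify at the basepoint via \ref{Y2} and \ref{space_idem}, and then pull the $\muu$ back out; the rest is transparent manipulation with the identities collected in Proposition \ref{properties_space}.
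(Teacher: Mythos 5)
Your proof is correct, and it reorganizes the paper's computation in a way worth noting. The paper argues directly on $q'(x,a,y)=q(x,a,e)+q(e,a,y)=q(e,2,q(q(x,a,e),\muu,q(e,a,y)))$ and then manipulates the expression with the scalar $2$ kept explicit; in doing so its displayed chain includes an exchange of the form $q(q(\cdot,a,\cdot),2,q(\cdot,a,\cdot))=q(q(\cdot,2,\cdot),a,q(\cdot,2,\cdot))$, i.e.\ an affine-type swap at the scalar $2$ rather than at $\muu$, which is not literally what (\ref{affine}) grants and itself needs a halving-and-cancelling justification. Your version sidesteps this entirely: by applying $q(e,\muu,-)$ to both sides first, using $q(e,\muu,u+v)=q(u,\muu,v)$ to absorb the module addition, and then invoking (\ref{affine}) twice --- once forwards at $(x,e,e,y)$ and once backwards at $(e,e,x,y)$, with \ref{Y1}, \ref{Y2} and \ref{space_idem} in between --- you only ever use the affine law at $\muu$, exactly as the definition provides, and cancellation \ref{space_cancel} finishes the job. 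Both proofs rest on the same ingredients (the identification $\varphi_{1-a}(x)=q(e,\overline{a},x)=q(x,a,e)$ and the affine law), but yours is the more self-contained of the two; the paper's is shorter on the page at the cost of an implicit extra step. All individual steps in your argument check out against the axioms and Proposition \ref{properties_space}.
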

\begin{proof}
From $(X,+,e,\varphi)$, we define
$$ q'(x,a,y)=\varphi_{1-a}(x)+\varphi_a(y)$$
and obtain the following equalities:
\begin{eqnarray*}
q'(x,a,y)&=& q(e,\overline{a},x)+q(e,a,y)\\
         &=& q(x,a,e)+q(e,a,y)\\
         &=& q(e,2,q(q(x,a,e),\muu,q(e,a,y))).
\end{eqnarray*}
Now, because we are considering that $(X,q)$ is affine, we get:
\begin{eqnarray*}
q'(x,a,y)&=& q(q(e,a,e),2,q(q(x,\muu,e),a,q(e,\muu,y)))\\
         &=& q(q(e,2,q(e,\muu,x)),a,q(e,2,q(e,\muu,y)))\\
         &=& q(q(e,2\cdot \muu,x),a,q(e,2\cdot\muu,y))\\
				 &=& q(x,a,y).
\end{eqnarray*}
\end{proof}

We have completely characterized affine mobi spaces in terms of modules over a unitary ring in which 2 is invertible. In a sequel to this paper we will investigate how to characterize mobi spaces in terms of  homomorphisms between mobi algebras. 

We finish this section by taking a closer look to Example \ref{physics}(\ref{projectiles}) 
and a related module. 
This is an example of an affine mobi space and can be extended to the case where the underlying mobi algebra is $(\mathbb{R},p,0,\frac{1}{2},1)$. Then by Theorem \ref{mobi2module} we get a module over $(\R,+,\cdot,0,1)$ given by $(\R^{n+1},+,0,\varphi)$ with $x,y,k\in\R^n$, $s,t\in\R$ and
\begin{eqnarray*}
(x,s)+(y,t)&=&(x+y-2 k s t,s+t)\\
\varphi_a(x,s)&=&(a x+k a (1-a) s^2,a s).
\end{eqnarray*}
By Theorem \ref{module2mobi}, we can construct a mobi space from this module and verify that it is the same as (\ref{eqprojectiles}). Of course, in this example, the module is a vector field and there is a homomorphism, namely
$$f(x,s)=(x+k (s^2-s),s)$$
from $(\R^{n+1},+,0,\varphi)$ to the usual vector field in $\R^{n+1}$.

In the following section we will thoroughly analyse  a procedure to construct examples of mobi spaces which in general are not affine mobi spaces. In a sequel to this work we will investigate the case of spaces with geodesics and how to construct mobi spaces out of them.

\section{Non-affine mobi spaces}\label{sec6}

We present here a general result from which Theorem \ref{thm:extra dim} can be deduced. In general, the examples that are obtained in this way are not affine.

\begin{proposition}\label{General-Example}

Let $(X,q_X)$ and $(Y,q_Y)$ be two mobi spaces over a mobi algebra $(A,p)$. Suppose we have two sets $U$, $V$ and a function $h\colon{U\times Y\times V\to X}$ such that the system 
\begin{equation}\label{system_h}
\left\{\begin{array}{rcl}
h(\alpha,y_1,\beta)&=&x_1\\
h(\alpha,y_2,\beta)&=&x_2
\end{array}\right.
\end{equation}
has a unique solution for every $x_1, x_2 \in X$ and any $y_1, y_2 \in Y$ with $y_1\neq y_2$, namely 
\begin{equation}\label{alpha-beta}
\left\{\begin{array}{rcl}
\alpha&=&\alpha(x_1,y_1,x_2,y_2)\\
\beta&=&\beta(x_1,y_1,x_2,y_2)
\end{array}\right..
\end{equation}
Then, $(X\times Y,q)$ is a mobi space over the mobi algebra $(A,p)$ where
$$ q\colon{(X\times Y)\times A\times (X\times Y)\to (X\times Y)}$$
is defined using the map $\chi$, via (\ref{alpha-beta}), \begin{equation}\label{chi}
\chi(x_1,y_1,a,x_2,y_2)=
\left\{\begin{array}{rcl}
h[\alpha,q_Y(y_1,a,y_2),\beta] &\textrm{ if } &y_1\neq y_2\\ 
q_X(x_1,a,x_2)&\textrm{ if } &y_1= y_2
\end{array}\right..
\end{equation}
as
$$ q\left((x_1,y_1),a,(x_2,y_2)\right)=\left(\chi(x_1,y_1,a,x_2,y_2),q_Y(y_1,a,y_2)\right).$$
\end{proposition}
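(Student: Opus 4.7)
The plan is to verify each of the axioms \ref{space_0}--\ref{space_homo} for $(X\times Y,q)$ coordinate by coordinate. The second coordinate of $q\bigl((x_1,y_1),a,(x_2,y_2)\bigr)$ is $q_Y(y_1,a,y_2)$, so every axiom projects on the second factor to the corresponding axiom of the mobi space $(Y,q_Y)$, which holds by assumption. All the genuine work lies in the first coordinate, defined through $\chi$, and throughout the proof one has to split cases on whether $y_1=y_2$ (easy branch, handled by $q_X$) or $y_1\neq y_2$ (hard branch, handled through $h$ and the uniqueness of solutions to (\ref{system_h})).

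First I would dispose of \ref{space_0}, \ref{space_1} and \ref{space_idem}. When $y_1\neq y_2$, setting $a=0$ in $\chi$ gives $h(\alpha,q_Y(y_1,0,y_2),\beta)=h(\alpha,y_1,\beta)=x_1$ directly from (\ref{system_h}), and similarly for $a=1$; the case $y_1=y_2$ reduces to \ref{space_0}, \ref{space_1}, \ref{space_idem} for $q_X$. For cancellation \ref{space_cancel}, equality of second coordinates combined with \ref{space_cancel} for $q_Y$ forces $y_1=y_2=:y'$; if moreover $y=y'$ the equality on the first coordinate reduces to \ref{space_cancel} for $q_X$, so the interesting case is $y\neq y'$. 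Here I would first show $y'':=q_Y(y,\muu,y')\neq y$: otherwise $q_Y(y,\muu,y')=y=q_Y(y,\muu,y)$ and \ref{space_cancel} for $q_Y$ would give $y=y'$. Then $h(\alpha_i,y,\beta_i)=x$ and $h(\alpha_i,y'',\beta_i)$ agree for $i=1,2$, so the uniqueness hypothesis in (\ref{system_h}) (applicable because $y\neq y''$) forces $(\alpha_1,\beta_1)=(\alpha_2,\beta_2)$, whence $x_1=h(\alpha_1,y',\beta_1)=h(\alpha_2,y',\beta_2)=x_2$.

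The serious step is homogeneity \ref{space_homo}. Fix $(x,y),(x',y')$ and $a,b,c\in A$; the case $y=y'$ collapses immediately to \ref{space_homo} for $q_X$, so assume $y\neq y'$. Write $u_a=q_Y(y,a,y')$, $u_c=q_Y(y,c,y')$, and let $\alpha,\beta$ be the unique solution of (\ref{system_h}) for the pair $(x,x')$ against $(y,y')$. Then both intermediate points share the same $\alpha,\beta$: we have $q((x,y),a,(x',y'))=(h(\alpha,u_a,\beta),u_a)$ and $q((x,y),c,(x',y'))=(h(\alpha,u_c,\beta),u_c)$. If $u_a\neq u_c$, the outer application of $q$ invokes (\ref{system_h}) again, and by uniqueness its solution is the same $(\alpha,\beta)$, so the first coordinate of the outer $q$ equals $h(\alpha,q_Y(u_a,b,u_c),\beta)$; applying \ref{space_homo} for $q_Y$ turns $q_Y(u_a,b,u_c)$ into $q_Y(y,p(a,b,c),y')$, which is exactly the first coordinate of the right-hand side. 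If instead $u_a=u_c$, \ref{space_idem} for $q_X$ makes the left-hand side first coordinate collapse to $h(\alpha,u_a,\beta)$, while on the right I would apply property \ref{Y10} of Proposition \ref{properties_space} to $(Y,q_Y)$ to conclude $q_Y(y,p(a,b,c),y')=q_Y(y,a,y')=u_a$, so the two first coordinates agree.

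The main obstacle is the degenerate subcase $u_a=u_c$ inside \ref{space_homo}: an inattentive calculation would split $q_Y(y,\cdot,y')$ through a third, unrelated value of $p(a,b,c)$, and one really needs \ref{Y10} to know that the image under $q_Y(y,\cdot,y')$ is unchanged. Everything else is careful bookkeeping exploiting (i) the uniqueness of $(\alpha,\beta)$ in (\ref{system_h}), which one must repeatedly argue applies by verifying that the relevant pair of second coordinates is distinct, and (ii) the axioms already granted in $(X,q_X)$ and $(Y,q_Y)$.
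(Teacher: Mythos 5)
Your proposal is correct and follows essentially the same route as the paper's proof: axioms \ref{space_0}--\ref{space_idem} directly, cancellation via the second coordinate plus uniqueness of $(\alpha,\beta)$ for the pair $(y_1,q_Y(y_1,\muu,y_2))$, and \ref{space_homo} split into the same three cases ($y_1=y_2$; $y_1\neq y_2$ with $y_a\neq y_c$ using uniqueness; $y_1\neq y_2$ with $y_a=y_c$ using idempotency). Your appeal to \ref{Y10} in the degenerate case is just the paper's direct application of \ref{space_homo} and \ref{space_idem} for $q_Y$ packaged as a lemma, so the two arguments coincide.
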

\begin{proof}
The axioms \ref{space_0}, \ref{space_1} and \ref{space_idem} are direct consequences of (\ref{system_h}) and the fact that $q_Y$ and $q_X$ are operations of mobi spaces. To prove \ref{space_cancel}, we first observe that 
\begin{equation}\label{Axiom4}
q\left[(x_1,y_1),\muu,(x_2,y_2)\right]=q\left[(x_1,y_1),\muu,(x'_2,y'_2)\right]
\end{equation}
implies $q_Y(y_1,\muu,y_2)=q_Y(y_1,\muu,y'_2)$ and hence $y'_2=y_2$. If $y_2=y_1$, we also get $q_X(x_1,\muu,x_2)=q_X(x_1,\muu,x'_2)$ and consequently $x'_2=x_2$. When $y_2\neq y_1$, $y'_2=y_2$ and (\ref{Axiom4}) imply
$$
h[\alpha,q_Y(y_1,\muu,y_2),\beta]
= h[\alpha',q_Y(y_1,\muu,y_2),\beta']\equiv x_3,$$
where $\alpha'=\alpha(x_1,y_1,x'_2,y_2)$ and $\beta'=\beta(x_1,y_1,x'_2,y_2)$.
Now, because $y_2\neq y_1\Rightarrow q_Y(y_1,\muu,y_2)\neq y_1$, the system
$$\left\{\begin{array}{rcl}
h(\alpha,y_1,\beta)&=&x_1\\
h(\alpha,q_Y(y_1,\muu,y_2),\beta)&=&x_3
\end{array}\right.$$
has a unique solution, we then conclude that $\alpha=\alpha'$ and $\beta=\beta'$ and consequently that 
$$x'_2=h(\alpha',y_2,\beta')=h(\alpha,y_2,\beta)=x_2.$$ 
Let us now prove \ref{space_homo}.
We have to prove that $Q_1=Q_2$ where:
$$Q_1\equiv q\left[(x_1,y_1),p(a,b,c),(x_2,y_2)\right]$$
$$Q_2\equiv q\Big(q\left[(x_1,y_1),a,(x_2,y_2)\right],b,q\left[(x_1,y_1),c,(x_2,y_2)\right]\Big) .$$
To simplify the presentation of the proof, the following notations are used:
$$y_a=q_Y(y_1,a,y_2),\quad y_c=q_Y(y_1,c,y_2),$$
$$\chi_a=h(\alpha,y_a,\beta),\quad\chi_c=h(\alpha,y_c,\beta).$$
\begin{itemize}
\item Considering $y_1\neq y_2$ and $y_a \neq y_c$, we have
$$ \begin{array}{rcl}
Q_1&=&\left(h[\alpha,q_Y(y_1,p(a,b,c),y_2),\beta],q_Y[y_1,p(a,b,c),y_2]\right)\\[10pt]
   &=&\left(h[\alpha,q_Y(y_a,b,y_c),\beta],q_Y[y_a,b,y_c]\right)
\end{array}$$
and
$$\begin{array}{rcl}
Q_2&=&q\Big[(\chi_a,y_a),b,(\chi_c,y_c)\Big]\\[10pt]
   &=&(h[\tilde{\alpha},q_Y(y_a,b,y_c),\tilde{\beta}],q_Y[y_a,b,y_c])
\end{array},$$
where $\tilde{\alpha}$ and $\tilde{\beta}$ are the unique solutions of the system
$$\left\{\begin{array}{rcl}
h(\tilde{\alpha},y_a,\tilde{\beta})&=&\chi_a\\
h(\tilde{\alpha},y_c,\tilde{\beta})&=&\chi_c
\end{array}\right.$$
which imply that $\tilde{\alpha}=\alpha$ and $\tilde{\beta}=\beta$, by definition of $\chi_a$ and $\chi_c$ and because $y_a\neq y_c$, therefore $Q_1=Q_2$.
\item Considering $y_1= y_2$, and hence $y_a = y_c=y_1$, we have
$$\begin{array}{rcl}
Q_1&=&\Big(q_X[x_1,p(a,b,c),x_2],q_Y[y_1,p(a,b,c),y_2]\Big)\\[10pt]
   &=&\Big(q_X[q_X(x_1,a,x_2),b,q_X(x_1,c,x_2)],y_1\Big)
\end{array}$$
and
$$\begin{array}{rcl}
Q_2&=&q\Big((q_X[x_1,a,x_2],y_a),b,(q_X[x_1,c,x_2],y_c)\Big)\\[10pt]
   &=&\Big(q_X[q_X(x_1,a,x_2),b,q_X(x_1,c,x_2)],q_Y(y_a,b,y_c)\Big)\\[10pt]
   &=&\Big(q_X[q_X(x_1,a,x_2),b,q_X(x_1,c,x_2)],y_1\Big)
\end{array}$$
implying that $Q_1=Q_2$.
\item Considering $y_1\neq y_2$ and $y_a = y_c$, hence $\chi_a=\chi_c$, we have
$$\begin{array}{rcl}
Q_1&=&\left(h[\alpha,q_Y(y_a,b,y_c),\beta],q_Y[y_a,b,y_c]\right)\\
   &=&(\chi_a,y_a)
\end{array}$$
and
$$\begin{array}{rcl}
Q_2&=&q\Big[(\chi_a,y_a),b,(\chi_c,y_c)\Big]\\[10pt]
   &=&\Big(q_X[\chi_a,b,\chi_c],q_Y[y_a,b,y_c])\Big)\\[10pt]
   &=&(\chi_a,y_a)\\[10pt]
	 &=&Q_1.
\end{array}$$
\end{itemize}
\end{proof}
As an example, consider $U=X=\R$, $V=\R^+$, $Y=\R_0^+$, $(A,p)$ the canonical mobi algebra and $h(\alpha, y,\beta)=\alpha\, \beta^y.$ Then, for $y_1\neq y_2$,
$$h[\alpha(x_1,y_1,x_2,y_2),t,\beta(x_1,y_1,x_2,y_2)]=x_1^{\frac{t-y_2}{y_1-y_2}}\,x_2^{\frac{y_1-t}{y_1-y_2}},$$
and if $t$ is $q_Y(y_1,a,y_2)=y_1+a\,(y_2-y_1)$, we get:
$$q\left[(x_1,y_1),a,(x_2,y_2)\right]=(x_1^{1-a} x_2^a,y_1+a\,(y_2-y_1)).$$
This expression is well-defined even for $y_2=y_1$. This leaves no option for $q_X$ if we want a continuous operation, as the only possibility is $q_X(x_1,a,x_2)=x_1^{1-a} x_2^a$. But any other mobi operation is allowed when $y_2=y_1$ and we can write:
$$q\left[(x_1,y_1),a,(x_2,y_1)\right]=(q_X(x_1,a,x_2),y_1).$$
This example compares with Example \ref{ex1to4}(\ref{ex2}).
Note that in Example \ref{ex5to9}(\ref{ex8}), the branch corresponding to $(s,t)=(0,0)$ cannot be obtained by continuity due to the fact that the limit $(s,t)\to(0,0)$ does not exist. However, the {\it canonical} expression at $(0,0)$ is the choice which corresponds to approaching the origin through the path $t=s$. 

A useful particular case is when $X$ is a vector space and $h(\alpha,t,\beta)=\alpha f(t) + \beta g(t)$ for some scalar maps $f$ and $g$. Note that Theorem~\ref{thm:extra dim} is a reformulation of the following proposition.

\begin{proposition}\label{Linear Example}

Let $(X,q_X)$ and $(Y,q_Y)$ be two mobi spaces over a mobi algebra $(A,p)$. Suppose moreover that $X$ is  a vector space over a scalar field $F$ and let $f:Y\to F$ and $g:Y\to F$ be two functions such that, for any $y_1, y_2 \in Y$ with $y_1\neq y_2$, the following inequality holds
\begin{equation}\label{system}
f(y_1)\,g(y_2)\neq g(y_1)\, f(y_2) .
\end{equation}
Furthermore, we consider a function $K:Y\to X$.
Then $(X\times Y,q)$ is a mobi space over $(A,p)$ considering that
$$ q:(X\times Y)\times A\times (X\times Y)\to (X\times Y)$$
is defined as
$$q\left[(x_1,y_1),a,(x_2,y_2)\right]=\Big(\chi_a(x_1,y_1,x_2,y_2),q_Y(y_1,a,y_2)\Big)$$
with
$$ \begin{array}{l}
\chi_a\left(x_1,y_1,x_2,y_2\right)\\[20pt]
=\dfrac{g(y_2)\,\left(x_1+K(y_1)\right)-g(y_1)\,\left(x_2+K(y_2)\right)}{f(y_1)\,g(y_2)-f(y_2)\,g(y_1)}\, f[q_Y(y_1,a,y_2)]
\\[20pt]
-\dfrac{f(y_2)\,\left(x_1+K(y_1)\right)-f(y_1)\,\left(x_2+K(y_2)\right)}{f(y_1)\,g(y_2)-f(y_2)\,g(y_1)}\, g[q_Y(y_1,a,y_2)]
\\[20pt]
-K[q_Y(y_1,a,y_2)],
\end{array}$$
when $y_2\neq y_1$ and $\chi_a\left(x_1,y,x_2,y\right)=q_X(x_1,a,x_2)$ otherwise.
\end{proposition}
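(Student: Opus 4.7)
The plan is to obtain this proposition as a direct specialization of Proposition \ref{General-Example}. Taking $U = V = F$ and defining the linear--affine interpolation function
\[
h(\alpha, y, \beta) \;=\; \alpha\, f(y) + \beta\, g(y) - K(y),
\]
the system (\ref{system_h}) becomes
\[
\left\{\begin{array}{rcl}
\alpha\, f(y_1) + \beta\, g(y_1) &=& x_1 + K(y_1)\\
\alpha\, f(y_2) + \beta\, g(y_2) &=& x_2 + K(y_2)
\end{array}\right.,
\]
a $2\times 2$ linear system in the vector space $X$ over $F$. Its coefficient matrix has determinant $f(y_1)g(y_2) - f(y_2)g(y_1)$, which by hypothesis (\ref{system}) is non-zero whenever $y_1 \neq y_2$. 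Hence the system admits a unique solution $(\alpha,\beta)$ as required by the hypothesis of Proposition \ref{General-Example}.

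The next step is to apply Cramer's rule to read off $\alpha(x_1,y_1,x_2,y_2)$ and $\beta(x_1,y_1,x_2,y_2)$ explicitly, then substitute these into $h(\alpha,\, q_Y(y_1,a,y_2),\, \beta)$. This substitution produces exactly the three-term expression given for $\chi_a(x_1,y_1,x_2,y_2)$ in the statement: the coefficient of $f[q_Y(y_1,a,y_2)]$ is the Cramer quotient for $\alpha$, the coefficient of $g[q_Y(y_1,a,y_2)]$ is the Cramer quotient for $\beta$, and the final $-K[q_Y(y_1,a,y_2)]$ term comes from the $-K(y)$ summand in the definition of $h$. In the degenerate case $y_1 = y_2$, the definition $\chi_a(x_1,y,x_2,y) = q_X(x_1,a,x_2)$ coincides verbatim with the second branch of (\ref{chi}).

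Once these two verifications (solvability and formula-matching) are in place, Proposition \ref{General-Example} immediately yields the conclusion that $(X\times Y, q)$ is a mobi space over $(A,p)$, with no further axiom-checking necessary since all five mobi space axioms were already established in the general proposition. The only real work, therefore, is the routine Cramer computation followed by an algebraic rearrangement to match the stated expression; the main obstacle is purely notational bookkeeping with signs and denominators to confirm that the substituted $\alpha f + \beta g - K$ agrees term by term with the displayed formula for $\chi_a$.
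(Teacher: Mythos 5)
Your proposal is correct and follows exactly the paper's own route: Proposition \ref{Linear Example} is obtained by specializing Proposition \ref{General-Example} to $h(\alpha,t,\beta)=\alpha\,f(t)+\beta\,g(t)-K(t)$, with unique solvability of the $2\times 2$ system guaranteed by the non-vanishing determinant $f(y_1)g(y_2)-f(y_2)g(y_1)$ and the explicit $\chi_a$ recovered by Cramer's rule. One small correction: since the right-hand sides $x_i+K(y_i)$ lie in the vector space $X$ while $f(y),g(y)$ are scalars, the unknowns $\alpha,\beta$ must be vectors, so the parameter sets should be $U=V=X$ (as in the paper), not $U=V=F$.
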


\begin{proof}
This is just Proposition \ref{General-Example} for the case $$h(\alpha,t,\beta)=\alpha\,f(t)+\beta\,g(t)-K(t).$$ With $U=V=X$, the system (\ref{system_h}) simply reads
$$\left(\begin{array}{cc}
f(y_1)\ g(y_1)\\[5pt]
f(y_2)\ g(y_2)
\end{array}
\right)
\left(\begin{array}{c}
\alpha\\[5pt]
\beta
\end{array}
\right)=
\left(\begin{array}{c}
x_1+K(y_1)\\[5pt]
x_2+K(y_2)
\end{array}
\right).
$$
\end{proof}

To illustrate this Proposition, Example \ref{ex5to9}(\ref{ex9}) can be generalized using on $Y$ (generalizing the set $I$) an arbitrary mobi space. Consider $h(\alpha,t, \beta)=\alpha\, f(t) + \beta$, in any set $X$ for which the next formula is well defined. Then, when $y_1\neq y_2$,
$$
\begin{array}{l}
q\left[(x_1,y_1),a,(x_2,y_2)\right]=\\[0.3mm]
(x_1+(x_2-x_1)\,\dfrac{f(q_Y(y_1,a,y_2))-f(y_1)}{f(y_2)-f(y_1)},q_Y(y_1,a,y_2)).
\end{array}
$$ 
When $y_1=y_2$, $q\left[(x_1,y_1),t,(x_2,y_1)\right]=(q_X(x_1,a,x_2),y_1)$ for any mobi operation $q_X$.

Even when the system of equations (\ref{system_h}) does not have a unique solution, then, in some cases, it is still possible to define a mobi-space. This will be illustrated with the formula for spherical linear interpolation giving geodesics on the $n$-sphere.

\section{Geodesics on the n-sphere}\label{sec.Slerp}

The purpose of this section is to show that a mobi space can be obtained using the geodesic curves on the $n$-sphere 
$$S^n=\{x\in\R^{n+1}\mid\langle x,x\rangle _E=1\}$$ 
and on one sheet of the two-sheeted hyperbolic $n$-space \cite{Hyperbolic}, as for instance 
$$H^n=\{x\in\R^{n+1}\mid \langle x,x\rangle _L=-1,x_1>0\}.$$ 
The notations $\langle\,,\rangle_E$ and $\langle\,,\rangle_L$ are used for the usual Euclidean and Lorentzian inner products, respectively. For the construction of the mobi operation for both cases at once, it is convenient to consider the family of functions
\begin{equation}\label{fslerp}
f(a)=\frac{e^{\alpha a}-e^{-\alpha a}}{2 \alpha}\ \textrm{and}\ g(a)=\frac{e^{\alpha a}+e^{-\alpha a}}{2},
\end{equation}
where $a\in \R$ and the parameter $\alpha$ is a non-zero complex number. These functions are real functions if and only if $\alpha$ is a real number or a pure imaginary number. In particular, we have that:
\begin{itemize}
\item $\alpha=1 \Rightarrow f(a)=\sinh a\ \textrm{and}\ g(a)=\cosh a$;
\item $\alpha=i \Rightarrow f(a)=\sin a\ \textrm{and}\ g(a)=\cos a$;
\item $\alpha\to 0 \Rightarrow f(a)=a\ \textrm{and}\ g(a)=1$.
\end{itemize}
For our purpose, we want to consider only real functions and therefore, for the rest of this section, it is understood that $\alpha$ is such that $f$ and $g$ are real.
In any case, the functions (\ref{fslerp}) verify the following properties:
\begin{eqnarray}
-\alpha^2 f^2(a)+g^2(a)&=&1\label{41}\\
f(a) g(b)+f(b) g(a)&=&f(a+b)\\
\alpha^2 f(a) f(b)+g(a) g(b)&=&g(a+b)\label{43}\\
f(-a)=-f(a)&,& g(-a)=g(a)\label{44}\\
f(0)=0&,& g(0)=1.\label{45}
\end{eqnarray}

In general terms, let us consider an interval $I\in\R$ containing $0$ where g is injective. Let $V$ be an inner product space, with the inner product denoted by $\langle\,,\rangle $, and a subspace $X\subseteq\{x\in V\mid \langle x,x\rangle =-\alpha^2\}$. Inner product here means a nondegenerate symmetric bilinear form. We are going to show that when there exists a unique function 
$$\theta:X\times X\to I$$ such that 
$-\alpha^2 g[\theta(x,y)]= \langle x,y\rangle $,
$X$ can be given the structure of a mobi space. For instance, if $X$ is $S^n$, $\theta$ may be defined as 
$$\theta(x,y)=\arccos(\langle x,y\rangle _E)\ \textrm{with}\ I=[0,\pi]$$ and if $X=H^n$, $\theta$ may be defined as
$$\theta(x,y)=\arccosh(-\langle x,y\rangle _L)\ \textrm{with}\ I=[0,+\infty[.$$ The expressions are similar for any pure imaginary or non-zero real number~$\alpha$. The next two Propositions show explicitly how to construct a mobi operation on $X$ using the functions $f$, $g$ and $\theta$. This construction is based on the spherical linear interpolation (Slerp) used in computer graphics \cite{Slerp}. The first proposition is for the cases where the geodesic between two points is unique which occur when the only zero of $f$, in $I$, is zero. This is what happens for $H^n$ but not for $S^n$ because $\sin(\pi)=0$. Nevertheless, the Proposition \ref{Slerp1} could still be applied to a portion of the n-sphere which does not contain antipodal points, such as for example $\{x\in\R^{n+1}\mid  \langle x,x\rangle _E=1, x_1>0\}$.

\begin{proposition}\label{Slerp1}
Consider the {\bf real} functions $f$ and $g$, of one real variable, verifying the properties {\normalfont (\ref{41})} to {\normalfont(\ref{45})} for some number $\alpha$. Suppose that $g$ is injective in an interval $I$ containing $0$ and that, for $a\in I$, we have 
$$f(a)=0\Longleftrightarrow a=0.$$ Let $(V,\langle\,,\rangle )$ be a real inner vector space and consider a subspace 
$$X\subseteq\{x\in V\mid \langle x,x\rangle =-\alpha^2\}.$$ If there exists a unique function $\theta:X\times X\to I$ such that 
\begin{equation}
\langle x,y\rangle =-\alpha^2 g[\theta(x,y)]
\end{equation}
and $\theta(x,y)=0\Longleftrightarrow y=x,$
then $(X,q)$ is a mobi space over the canonical mobi algebra where the ternary operation $q:X\times[0,1]\times X\to X$ is defined, for $x\neq y$, by
\begin{equation}\label{eqSlerp}
q(x,t,y)=\frac{f[\theta(x,y)\, (1-t)]}{f[\theta(x,y)]}\, x+ \frac{f[\theta(x,y)\, t]}{f[\theta(x,y)]}\, y,
\end{equation}
and, otherwise, by $q(x,t,x)=x$.
\end{proposition}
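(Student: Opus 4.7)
The plan is to reparametrize the Slerp formula by arclength and exploit a clean linear-algebraic expression for the geodesic. Given $x \neq y$ in $X$, write $\theta := \theta(x,y)$ and set $\hat{e} := (y - g(\theta)\,x)/(\alpha^{2}f(\theta))$. Using $\langle x,x\rangle = \langle y,y\rangle = -\alpha^{2}$ and $\langle x,y\rangle = -\alpha^{2}g(\theta)$, a short calculation gives $\langle x,\hat{e}\rangle = 0$ and $\langle\hat{e},\hat{e}\rangle = 1$. Combining the subtraction formula $f(a-b) = f(a)g(b) - f(b)g(a)$, obtained from the addition formula for $f$ together with \ref{44}, with the Slerp formula \ref{eqSlerp} rewrites
\[ q(x,t,y) = \gamma(t\theta), \qquad \gamma(s) := g(s)\,x + \alpha^{2}f(s)\,\hat{e}. \]
This reformulation is the lever that opens all five axioms.

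From the parametrization $\gamma$ I derive the identity $\langle\gamma(s),\gamma(r)\rangle = -\alpha^{2}\bigl(g(s)g(r) - \alpha^{2}f(s)f(r)\bigr) = -\alpha^{2}g(s-r)$ via \ref{43} and \ref{44}; in particular $\langle\gamma(s),\gamma(s)\rangle = -\alpha^{2}$, so $q(x,t,y)$ lies in the ambient quadric (and thus in $X$ itself, on the assumption -- as in the paradigm examples $H^{n}$ and a hemisphere of $S^{n}$ -- that $X$ is closed under the Slerp curve). The uniqueness of $\theta$ then forces $\theta(\gamma(s),\gamma(r)) = |s-r|$ whenever $|s-r|\in I$. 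Axioms \ref{space_0}, \ref{space_1}, and \ref{space_idem} now drop out immediately from $\gamma(0) = x$, $\gamma(\theta) = y$, and the explicit convention $q(x,a,x) = x$.

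For \ref{space_cancel}, suppose $q(x,\muu,y_{1}) = q(x,\muu,y_{2})$ with both $y_{i} \neq x$ (the case $y_{1} = x$ is handled analogously: the inner product with $x$ of the right-hand side forces $g(\theta_{2}/2) = 1 = g(0)$, hence $\theta_{2} = 0$ and $y_{2} = x$). Writing both midpoints as $g(\theta_{i}/2)\,x + \alpha^{2}f(\theta_{i}/2)\,\hat{e}_{i}$ and taking the inner product with $x$ forces $g(\theta_{1}/2) = g(\theta_{2}/2)$; since $\theta_{i}/2\in I$ (as $I$ is an interval containing $0$ and $\theta_{i}$), injectivity of $g$ on $I$ gives $\theta_{1} = \theta_{2} =: \theta$. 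Cancelling the scalar $\alpha^{2}f(\theta/2)$, nonzero by the hypothesis $f(a) = 0 \Longleftrightarrow a = 0$, then yields $\hat{e}_{1} = \hat{e}_{2}$ and hence $y_{1} = y_{2}$.

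The main obstacle is \ref{space_homo}. Setting $x' := q(x,a,y) = \gamma(a\theta)$ and $y' := q(x,c,y) = \gamma(c\theta)$, the identity of the second paragraph gives $\theta(x',y') = |c-a|\theta$. Substituting the expressions for $\gamma(a\theta)$ and $\gamma(c\theta)$ into the Slerp formula for $q(x',b,y')$ and collecting the coefficients of $x$ and of $\hat{e}$ leaves two ratios of $f$--$g$ expressions in multiples of $\theta$. Both collapse to $g(p(a,b,c)\theta)$ and $f(p(a,b,c)\theta)$ respectively through the key algebraic identity
\[ f(\sigma - u)\,f(s) + f(u)\,f(s + \sigma) = f(\sigma)\,f(s + u), \]
and its analogue with one outer $f$ replaced by $g$; each reduces to the one-line lemma $f(\sigma - u) + f(u)\,g(\sigma) = f(\sigma)\,g(u)$, which is just the subtraction formula rearranged. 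Therefore $q(x',b,y') = \gamma(p(a,b,c)\theta) = q(x,p(a,b,c),y)$, and the degenerate cases (e.g.\ $a = c$, so that $x' = y'$) are handled directly from the second clause of the definition of $q$.
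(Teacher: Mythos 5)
Your proof is correct, and it reorganizes the argument around a device the paper's own proof does not use: the orthonormal frame $\{x,\hat e\}$ with $\hat e=(y-g(\theta)x)/(\alpha^2 f(\theta))$ and the arclength parametrization $\gamma(s)=g(s)x+\alpha^2 f(s)\hat e$, so that $q(x,t,y)=\gamma(t\theta)$. The paper instead works throughout in the two-point form, expanding $\langle q(x,t,y),q(x,t,y)\rangle$ and the nested Slerp expressions directly via the addition formulas; its verification of \ref{space_homo} is a longer coefficient computation landing on $\frac{f[\Omega(1-a-\hat c\,b)]}{f(\Omega)}x+\frac{f[\Omega(a+\hat c\,b)]}{f(\Omega)}y$, and its cancellation argument pairs the inner product with $x$ against the half-angle identities $f(\Omega)=2f(\Omega/2)g(\Omega/2)$ and $1+g(\Omega)=2g^2(\Omega/2)$. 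Your route buys shorter and more conceptual verifications: membership in the quadric and the key fact $\theta(\gamma(s),\gamma(r))=|s-r|$ both fall out of the single identity $\langle\gamma(s),\gamma(r)\rangle=-\alpha^2g(s-r)$, and \ref{space_cancel} becomes a two-line comparison of frame coefficients. The price is the preliminary check that $\hat e$ is a unit vector orthogonal to $x$, plus the same appeal both proofs must make to the ``subspace'' hypothesis to conclude that a linear combination of $x$ and $y$ lying on the quadric lies in $X$. The underlying machinery is identical --- everything reduces to (\ref{41})--(\ref{45}), and your key lemma $f(\sigma-u)f(s)+f(u)f(s+\sigma)=f(\sigma)f(s+u)$ is of the same family as the coefficient collapses the paper performs in the $(x,y)$ basis --- and your handling of the degenerate cases ($y=x$, $a=c$, and the sign of $c-a$ absorbed by the oddness of $f$ from (\ref{44})) matches the paper's.
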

\begin{proof}
To simplify the presentation, we use the notation $\Omega\equiv\theta(x,y)$.
First, we have to prove that, when $x,y\in X$, $q(x,t,y)$ is still in $X$. The case $y=x$ is obvious. For $y\neq x$:
\begin{eqnarray*} 
\langle q(x,t,y),q(x,t,y)\rangle &=&\frac{f^2(\Omega (1-t))}{f^2(\Omega)} \langle x,x\rangle +\frac{f^2(\Omega t)}{f^2(\Omega)} \langle y,y\rangle \\
                            &+&2\frac{f(\Omega (1-t)) f(\Omega t)}{f^2(\Omega)} \langle x,y\rangle 
\end{eqnarray*}
\begin{eqnarray*}
&=&-\frac{\alpha^2}{f^2(\Omega)} (f^2(\Omega (1-t))+f^2(\Omega t)+2 f(\Omega (1-t)) f(\Omega t) g(\Omega))\\
&=&-\frac{\alpha^2}{f^2(\Omega)} (f(\Omega-\Omega t))(f(\Omega)g(\Omega t)+f(\Omega t) g(\Omega))+f^2(\Omega t))\\
&=&-\frac{\alpha^2}{f^2(\Omega)} (f^2(\Omega)g^2(\Omega t)+f^2(\Omega t)(1-g^2(\Omega))\\
&=&-\alpha^2 (g^2(\Omega t)-\alpha^2 f^2(\Omega t))=-\alpha^2
.\end{eqnarray*}
Now, as $X$ is a subspace containing $x$ and $y$ and $q(x,t,y)$ is a linear combination of $x$ and $y$, we conclude that it is also in $X$.
Axioms \ref{space_0}, \ref{space_1}, \ref{space_idem} of a mobi space are a direct consequence of the definition of $q$. To prove \ref{space_cancel} we will use the notation $\Omega'\equiv\theta(x,y')$. If $x\neq y$ and $x\neq y'$, we have that $q\left(x,\oh,y\right)=q\left(x,\oh,y'\right)$ implies
\begin{eqnarray}
\frac{f\left(\frac{\Omega}{2}\right)}{f(\Omega)} (x+y)=\frac{f\left(\frac{\Omega'}{2}\right)}{f(\Omega')} (x+y')\label{proofX4}
\end{eqnarray}
Applying the inner product with $x$ in both sides of this equation and using properties (\ref{41}) to (\ref{43}) in the form
$$
f(\Omega)=2 f\left(\frac{\Omega}{2}\right) g\left(\frac{\Omega}{2}\right)\ \textrm{and}\quad
1+g(\Omega)=2 g^2\left(\frac{\Omega}{2}\right),
$$
we get
\begin{eqnarray*}
&&\frac{1}{2g\left(\frac{\Omega}{2}\right)}\langle x,x+y\rangle =\frac{1}{2g\left(\frac{\Omega'}{2}\right)}\langle x,x+y'\rangle \\
&\Rightarrow&
\frac{1}{2g\left(\frac{\Omega}{2}\right)}(-\alpha^2-\alpha^2g(\Omega))=\frac{1}{2g\left(\frac{\Omega'}{2}\right)}(-\alpha^2-\alpha^2g(\Omega'))\\
&\Rightarrow&
g\left(\dfrac{\Omega}{2}\right)=g\left(\dfrac{\Omega'}{2}\right)\neq 0.
\end{eqnarray*}
Going back to (\ref{proofX4}) with this result, we conclude $y=y'$. If $x=y$ and $x\neq y'$, then $q\left(x,\oh,y\right)\neq q\left(x,\oh,y'\right)$. Indeed,  $q\left(x,\oh,x\right)= q\left(x,\oh,y'\right)$ would imply
$$\begin{array}{l}
x=\frac{f\left(\frac{\Omega'}{2}\right)}{f(\Omega')} (x+y')\Rightarrow \langle x,x\rangle =\frac{1}{2 g\left(\frac{\Omega'}{2}\right)}\,\langle x,x+y'\rangle \\[10pt]
\Rightarrow 1=\frac{1}{2 g\left(\frac{\Omega'}{2}\right)}\,(1+g(\Omega'))\Rightarrow g\left(\frac{\Omega'}{2}\right)=1\Rightarrow y'=x,
\end{array}$$
in contradiction with the hypothesis. The case $x\neq y$ and $x=y'$ is similar. Obviously \ref{space_cancel} is also verified if $x=y$ and $x=y'$.
The proof of \ref{space_homo} begin with the observation that:
\begin{eqnarray}
g[\theta(q(x,a,y),q(x,c,y))]=g[\theta(x,y)(c-a)].\label{qaqc}
\end{eqnarray}
Indeed, beginning with the left-hand side of (\ref{qaqc}), if $y\neq x$:
\begin{eqnarray*}
&&-\frac{1}{\alpha^2} \langle \frac{f(\Omega (1-a))}{f(\Omega)} x+ \frac{f(\Omega a)}{f(\Omega)} y,
\frac{f(\Omega (1-c))}{f(\Omega)} x+ \frac{f(\Omega c)}{f(\Omega)} y\rangle \\
&=&\frac{f(\Omega (1-a))f(\Omega (1-c))}{f^2(\Omega)}+\frac{f(\Omega a)f(\Omega c)}{f^2(\Omega)}\\
&+&\left(\frac{f(\Omega (1-a))f(\Omega c)+f(\Omega (1-c))f(\Omega a)}{f^2(\Omega)}\right)g(\Omega)\\
&=&\frac{f^2(\Omega)g(\Omega a)g(\Omega c)-g^2(\Omega)f(\Omega a)f(\Omega c)+f(\Omega a)f(\Omega c)}{f^2(\Omega)}\\
&=&g(\Omega a)g(\Omega c)-\alpha^2f(\Omega a)f(\Omega c)\\
&=&g(\Omega a-\Omega c)=g(\Omega c-\Omega a)
\end{eqnarray*}
If $y=x$, then
\begin{eqnarray*}
g[\theta(q(x,a,y),q(x,c,y))]&=&-\frac{1}{\alpha^2} \langle q(x,a,y),q(x,c,y)\rangle \\
                            &=&-\frac{1}{\alpha^2} \langle x,x\rangle \\
                            &=& 1=g(0)=g[\theta(x,y)(c-a)].
\end{eqnarray*}
Now, because $a,c\in[0,1]$ and $\Omega\in I$ imply $\Omega |c-a|\in I$, we can conclude, since $g$ is injective in I, that 
\begin{equation}
|\theta(q(x,a,y),q(x,c,y))|=|\theta(x,y)(c-a)|=|\Omega (c-a)|.
\end{equation}
By (\ref{44}), this also imply that, for any $b\in [0,1]$, the following relation is true:
\begin{equation*}
\frac{f[\theta(q(x,a,y),q(x,c,y))\, b]}{f[\theta(q(x,a,y),q(x,c,y))]}=\frac{f[\Omega (c-a)\,b]}{f[\Omega (c-a)]}.
\end{equation*}
With these results, we are able to prove \ref{space_homo}. For simplification, we use the notation $\hat{c}\equiv c-a$. First, for $q(x,a,y)\neq q(x,c,y)$ and $x\neq y$:
\begin{eqnarray*}
&&q[q(x,a,y),b,q(x,c,y)]\\
&=&\frac{f[\Omega\,\hat{c}(1-b)]}{f[\Omega\,\hat{c}]}\,q(x,a,y)+\frac{f[\Omega\,\hat{c}\,b]}{f[\Omega\,\hat{c}]}\,q(x,\hat{c}+a,y)\\
&=&\frac{f[\Omega\,\hat{c}-\Omega\,\hat{c}\,b]f[\Omega(1-a)]+f[\Omega\,\hat{c}\,b]f[\Omega(1-a)-\Omega\,\hat{c}]}{f[\Omega\,\hat{c}]f(\Omega)}\,x\\
&\ +&\frac{f[\Omega\,\hat{c}-\Omega\,\hat{c}\,b]f[\Omega a]+f[\Omega\,\hat{c}\,b]f[\Omega\,a+\Omega\,\hat{c}]}{f[\Omega\,\hat{c}]f(\Omega)}\,y\\
&=&\frac{g[\Omega\,\hat{c}\,b]f[\Omega(1-a)]-f[\Omega\,\hat{c}\,b]g[\Omega(1-a)]}{f(\Omega)}\,x\\
&+&\frac{g[\Omega\,\hat{c}\,b]f[\Omega\,a]+f[\Omega\,\hat{c}\,b]g[\Omega\,a]}{f(\Omega)}\,y\\
&=&\frac{f[\Omega(1-a-\hat{c}\,b)]}{f(\Omega)}\,x+\frac{f[\Omega(a+\hat{c}\,b)]}{f(\Omega)}\,y\\
&=&q[x,a+(c-a)b,y].
\end{eqnarray*}
If $q(x,a,y)=q(x,c,y)$, on the one hand $q(q(x,a,y),b,q(x,c,y))=q(x,a,y)$. On the other hand, from (\ref{qaqc}), we conclude that $x=y$ or $c=a$ and in both cases $q(x,a+b(c-a),y)=q(x,a,y)$.
\end{proof}

Before going to Proposition \ref{Slerp2} that will explain how we can still get a mobi space out of a {\it Slerp type} formula on the $n$-sphere despite the fact that geodesics between antipodal points are not unique, let us take a closer look to the formula (\ref{eqSlerp}) in the case of $S^n$. This formula just gives the intersection between $S^n$ and a plane that contains the origin and the points $x$ and $y$, when $x$ and $y$ are not collinear. Starting at $x$ when $t=0$, a particle that goes to $y$ on that plane at constant speed will be, at an instant $t\in[0,1]$ at 
\begin{equation}\label{z1}
q(x,t,y)= \cos(\Omega\,t)\,x + \sin(\Omega\,t)\,z
\end{equation}
where 
\begin{equation}\label{z2}
\cos(\Omega)\,x+ \sin(\Omega)\,z=y.
\end{equation}
Because $\langle x,x\rangle _E=\langle y,y\rangle _E=1$ and $\Omega\equiv\theta(x,y)=\arccos\langle x,y\rangle _E$, we have that $\langle x,z\rangle _E=0$ and $\langle z,z\rangle _E=1$. When $\sin(\Omega)\neq 0$, we can just solve (\ref{z2}) to obtain $z$ and then (\ref{z1}) reads as expected:
$$q(x,t,y)=\frac{\sin[\Omega\,(1-t)]}{\sin(\Omega)}\,x+\frac{\sin[\Omega\,t]}{\sin(\Omega)}\,y.$$
When $\Omega=0$, which means $x=y$, there is no journey to make: $q(x,t,x)=x$. When $\Omega=\pi$, which means $y=-x$, we have to choose the plane we wish to travel on. Equivalently, we have to chose the direction $v(x)\in\R^{n+1}$ we want to be playing the role of $z$. Of course, we still need $\langle x,v(x)\rangle _E=0$ and $\langle v(x),v(x)\rangle _E=1$. There is one more condition: to get a mobi space, we also need $v$ to be an even map because $q(x,t,-x)=q(-x,1-t,x)$ (property \ref{Y1}) which means that in a round trip, the going and the return must be done on the same path.

\begin{proposition}\label{Slerp2}
Consider the euclidean $n$-sphere $$X=\{x\in\R^{n+1}\mid\,\langle x,x\rangle =1\},$$ a map $v:X\to X$ such that
$$
v(-x)=v(x)\ \textrm{and}\ \langle x,v(x)\rangle =0,
$$
and the map $\theta:X\times X\to [0,\pi]$ defined by $$\theta(x,y)=\arccos(\langle x,y\rangle ).$$
With $q:X\times[0,1]\times X\to X$ defined by
$$
\begin{array}{l}
q(x,t,y)=\\[3mm]
\left\{ 
\begin{array}{ccl}
\frac{\sin[\theta(x,y)\, (1-t)]}{\sin[\theta(x,y)]}\, x+ \frac{\sin[\theta(x,y)\, t]}{\sin[\theta(x,y)]}\, y&,if &\theta(x,y)\in ]0,\pi[\\[3mm]
\cos[\theta(x,y)\, t]\,x+\sin[\theta(x,y)\, t]\,v(x) &,if &\theta(x,y)\in\{0,\pi\}
\end{array}
\right.,
\end{array}$$
$(X,q)$ is a mobi space over the canonical mobi algebra.
\end{proposition}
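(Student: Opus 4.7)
The plan is to reduce all five axioms to computations on a single great-circle arc, by showing that for fixed $x,y\in X$ the map $t\mapsto q(x,t,y)$ always has the form $\psi(t) = \cos(\Omega t)\,x + \sin(\Omega t)\,w$ with $\Omega = \theta(x,y) \in [0,\pi]$ and $w \in X$ satisfying $\langle x,w\rangle = 0$ and $\langle w,w\rangle = 1$. I set $w = (y - \cos\Omega\cdot x)/\sin\Omega$ if $y\neq\pm x$, $w = v(x)$ if $y = -x$, and leave $w$ irrelevant when $y = x$ (so $\Omega = 0$ and $\psi \equiv x$). A routine trigonometric manipulation, identical to the opening computation in the proof of Proposition \ref{Slerp1}, shows that $\psi(t)$ agrees with both branches of the definition of $q$ and in particular lies on $X$. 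Axioms \ref{space_0}, \ref{space_1}, \ref{space_idem} are then immediate from $\psi(0) = x$, $\psi(1) = y$ (the antipodal sub-case uses $\cos\pi = -1$), and the collapse $\Omega=0 \Rightarrow \psi\equiv x$.

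For cancellation \ref{space_cancel}, set $m = q(x,\muu,y)$. When $\Omega\in(0,\pi)$, a direct computation gives $m = (x+y)/(2\cos(\Omega/2))$ and $\langle x,m\rangle = \cos(\Omega/2) \in (0,1)$, from which $y = 2\langle x,m\rangle\,m - x$. When $\Omega=0$, $m = x$ and $\langle x,m\rangle = 1$; when $\Omega=\pi$, $m=v(x)$ and $\langle x,m\rangle = 0$. The three regimes are thus distinguished by the value of $\langle x,m\rangle$, and within each $y$ is determined by $m$, so $m$ uniquely determines $y$, which is exactly \ref{space_cancel}.

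The homomorphism axiom \ref{space_homo} is the main work. I set $u=\psi(a)$, $w'=\psi(c)$ and note that $\langle u,w'\rangle = \cos(\Omega(c-a))$, so $\theta(u,w') = \Omega|c-a|\in[0,\pi]$. If $\theta(u,w')\in(0,\pi)$, the non-antipodal branch applies to $q(u,b,w')$, and expanding the Slerp formula by product-to-sum identities (the same manipulation that closes the proof of Proposition \ref{Slerp1}) gives $q(u,b,w') = \cos(\Omega\,p(a,b,c))\,x + \sin(\Omega\,p(a,b,c))\,w = \psi(p(a,b,c)) = q(x,p(a,b,c),y)$. If $\theta(u,w')=0$, then $a=c$ and both sides equal $\psi(a)$ by \ref{space_idem}. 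The remaining sub-case $\theta(u,w')=\pi$ can occur only when $y=-x$ and $\{a,c\}=\{0,1\}$: for $(a,c)=(0,1)$ both sides equal $\psi(b)$, while for $(a,c)=(1,0)$ the antipodal branch gives $q(-x,b,x) = -\cos(\pi b)\,x + \sin(\pi b)\,v(-x) = -\cos(\pi b)\,x + \sin(\pi b)\,v(x) = \psi(1-b) = \psi(p(1,b,0))$, the evenness hypothesis $v(-x) = v(x)$ being used essentially.

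I expect the main obstacle to lie precisely in that last sub-case: the \emph{choice} of great circle encoded by $v$ must be consistent with composition of the operation, and the identity $v(-x) = v(x)$ is exactly what forces the round-trip from $x$ to $-x$ and back to use the same great-circle arc. Everything else is a routine reduction to the one-variable Slerp computation on a two-dimensional great circle, governed by the same trigonometric identities that already underlie Proposition \ref{Slerp1}.
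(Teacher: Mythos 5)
Your proof is correct. The underlying trigonometric identities are the same ones the paper uses, but your organization is genuinely different and arguably cleaner: where the paper proves Proposition~\ref{Slerp1} for the non-antipodal regime and then, in Proposition~\ref{Slerp2}, only patches in the extra sub-cases created by $\Omega=\pi$ (its situations (1a), (1b), (2), (3)), you introduce the single parametrization $\psi(t)=\cos(\Omega t)\,x+\sin(\Omega t)\,w$ with $w$ a unit vector orthogonal to $x$ --- a device the paper motivates informally around equations (\ref{z1})--(\ref{z2}) but does not exploit in its proof --- and this lets one product-to-sum computation cover both branches of $q$ simultaneously. Your cancellation argument also differs in kind: the paper argues by contradiction through a case split on the pair $(\Omega,\Omega')$, whereas you reconstruct $y$ explicitly from $m=q(x,\muu,y)$ via the invariant $\langle x,m\rangle=\cos(\Omega/2)$, which separates the three regimes and inverts the midpoint map within each; this buys a shorter and more conceptual proof of \ref{space_cancel}. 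Both routes isolate the same crucial point, namely that \ref{space_homo} in the sub-case $u=\psi(1)=-x$, $w'=\psi(0)=x$ forces the hypothesis $v(-x)=v(x)$. One small imprecision: from $\theta(u,w')=\Omega\,|c-a|=0$ you conclude $a=c$, but the alternative $\Omega=0$ (i.e.\ $y=x$) is also possible; this is harmless, since then $\psi$ is constant and both sides of \ref{space_homo} equal $x$, but the sentence should allow for it.
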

\begin{proof}
Most of the proof is the same as the proof of Proposition \ref{Slerp1}. We just have to consider the extra case where $y=-x$ corresponding to \mbox{$\Omega\equiv\theta(x,y)=\pi$}. 
When $\Omega=\pi$, we have $q(x,0,y)=\cos(0)x+\sin(0) v(x)=x$ and $q(x,1,y)=\cos(\pi)x+\sin(\pi) v(x)=-x=y$, so Axioms \ref{space_0}, \ref{space_1} and \ref{space_idem} of a mobi space are verified. Regarding \ref{space_cancel}, when $\Omega=\pi$ and $\Omega'\in]0,\pi[$, we have that $q\left(x,\oh,y\right)\neq q\left(x,\oh,y'\right)$. Indeed $q\left(x,\oh,y\right)=q\left(x,\oh,y'\right)$ implies
$$
v(x)=\frac{f\left(\frac{\Omega'}{2}\right)}{f(\Omega')} (x+y')\Rightarrow
0=g\left(\frac{\Omega'}{2}\right)\Rightarrow \Omega'=\pi,
$$
in contradiction with $\Omega'\in]0,\pi[$.
The case $\Omega=\pi$ and $\Omega'=0$ is also incompatible with $q\left(x,\oh,y\right)=q\left(x,\oh,y'\right)$ because $v(x)\neq x$. Interchanging $y$ and $y'$ in the previous situations gives similar results. The case $\Omega=\pi$ and $\Omega'=\pi$ implies $y=-x=y'$, therefore \ref{space_cancel} is verified. Regarding \ref{space_homo}, we first observe that (\ref{qaqc}) is valid for all $x, y\in X$. Indeed, if $y=-x$, then
\begin{eqnarray*}
&&\cos[\theta(q(x,a,y),q(x,c,y))]= \langle q(x,a,y),q(x,c,y)\rangle \\
                            &=&\langle \cos(\pi a)\,x+\sin(\pi a)\,v(x),\cos(\pi c)\,x+\sin(\pi c)\,v(x)\rangle \\
                            &=& \cos(\pi a)\cos(\pi c)+\sin(\pi a)\sin(\pi c)\\
                            &=& \cos[\pi(a-c)].
\end{eqnarray*}
So, we have that, $\forall x,y\in X$:
\begin{equation}\label{qaqc2}
\theta(q(x,a,y),q(x,c,y))=\theta(x,y) |c-a|.
\end{equation}
From equation (\ref{qaqc2}), we conclude that $\theta(q(x,a,y),q(x,c,y))=\pi$ if and only if $\Omega=\pi$ and $|c-a|=1$ and that $\theta(q(x,a,y),q(x,c,y))=0$ if and only if $\Omega=0$ or $c=a$. Therefore, besides the cases already proved in Proposition \ref{Slerp1}, we have to consider the following four situations: 
\begin{enumerate}
\item $\theta(q(x,a,y),q(x,c,y))=\pi$, $\Omega=\pi$ and
\begin{enumerate} 
\item $c=0$, $a=1$
\item $c=1$, $a=0$
\end{enumerate}
\item $\theta(q(x,a,y),q(x,c,y))=0$, $\Omega=\pi$ and $c=a$
\item $\theta(q(x,a,y),q(x,c,y))\in]0,\pi[$, $\Omega=\pi$, $c\neq a$ and $|c-a|\neq 1$.
\end{enumerate}
For the situation (1a):
\begin{eqnarray*}
q[q(x,a,y),b,q(x,c,y)]&=&q(-x,b,x)=\cos(\pi b) (-x)+\sin(\pi b)\, v(-x)\\
q[x,a+b(c-a),y]&=&q(x,1-b,-x)\\
               &=&\cos(\pi-\pi b)\, x+\sin(\pi-\pi b)\, v(x)\\
               &=&-\cos(\pi b)\, x+\sin(\pi b)\, v(x).
\end{eqnarray*}
The Axiom \ref{space_homo} is ensured through the hypothesis $v(-x)=v(x)$.
For the situation (1b):
\begin{eqnarray*}
q[q(x,a,y),b,q(x,c,y)]=q(x,b,-x)&=&\cos(\pi b)\, x+\sin(\pi b)\, v(x)\\
q[x,a+b(c-a),y]=q(x,b,-x)&=&\cos(\pi b)\, x+\sin(\pi b)\, v(x).
\end{eqnarray*}
In situation (2), $c=a$ and \ref{space_idem} implies \ref{space_homo}.
Using $\hat{c}\equiv c-a$, we have for the situation (3):
\begin{eqnarray*}
&&q[q(x,a,y),b,q(x,c,y)]\\
&=&\frac{\sin[\pi (c-a)(1-b)]}{\sin[\pi (c-a)]}\left(\cos(\pi a)\,x+\sin(\pi a)\,v(x)\right)\\
&+&\frac{\sin[\pi (c-a)b]}{\sin[\pi (c-a)]}\left(\cos(\pi c)\,x+\sin(\pi c)\,v(x)\right)\\
&=&\frac{\sin[\pi \hat{c}(1-b)]\cos(\pi a)+\sin[\pi \hat{c}\,b]\cos(\pi (\hat{c}+a))}{\sin[\pi\, \hat{c}]}\,x\\
&+&\frac{\sin[\pi \hat{c}(1-b)]\sin(\pi a)+\sin[\pi \hat{c}\,b]\sin(\pi (\hat{c}+a))}{\sin[\pi\, \hat{c}]}\,v(x)\\
&=&\cos[\pi\, \hat{c}\,b]\cos(\pi a)-\sin[\pi\, \hat{c}\,b]\sin[\pi a]\,x\\
&+&\cos[\pi\, \hat{c}\,b]\sin(\pi a)+\sin[\pi\, \hat{c}\,b]\cos[\pi a]\,v(x)\\
&=&\cos[\pi(a+\hat{c}\,b)]\,x+\sin[\pi(a+\hat{c}\,b)]\,v(x)\\
&=&q(x,a+b(c-a),y)
\end{eqnarray*}
\end{proof}
To finish this section, we present three examples of the map $v$ used in Proposition \ref{Slerp2}. 
First, consider the $1$-sphere i.e. the circle. We can choose to move between antipodal points in the anticlockwise direction when starting somewhere at the top of the circle and in the clockwise direction when starting at the bottom. More specifically, if $x=(\cos\theta,\sin\theta), \theta\in[0,2\pi[$, then $v$ is defined as
$$
v(x)=\left\{
\begin{array}{lcl}
(-\sin\theta,\cos\theta)&if& \theta\in[0,\pi[\\
(\sin\theta,-\cos\theta)&if& \theta\in[\pi,2\pi[
\end{array}
\right..
$$
Secondly, let us choose to connect two antipodal points on $S^2$, different from the poles, through the north pole and link the poles (on the z-axis) through the positive x-axis. This gives the following choice for $v$, considering $x_1^2+x_2^2+x_3^2=1$:
$$v(x_1,x_2,x_3)=\left\{
\begin{array}{ccl}
\dfrac{(-x_1 x_3,\, -x_2 x_3,\, 1-x_3^2)}{\sqrt{1-x_3^2}}&if& x_3\neq \pm 1\\[5mm]
(1,0,0) &if& x_3=\pm 1
\end{array}
\right..
$$
As a third example, consider the $2$-sphere parametrized in spherical coordinates as:
$$\{(\sin\varphi \cos\theta,\sin\varphi \sin\theta, \cos\varphi), (\theta,\varphi)\in([0,2\pi[\times]0,\pi[)\cup(0,0)\cup(0,\pi)\}.$$
A possible map $v$ is the following:
\begin{eqnarray*}
&&v(\sin\varphi \cos\theta,\sin\varphi \sin\theta, \cos\varphi)\\
&=&\left\{
\begin{array}{ccl}
(-\sin\theta,\cos\theta,0)&if& \varphi\in\left[0,\frac{\pi}{2}\right[ \textrm{or} \left(\varphi=\frac{\pi}{2}\ 
                                 , \theta\in[0,\pi[\right)\\[2mm]
(\sin\theta,-\cos\theta,0)&if& \varphi\in\left]\frac{\pi}{2},\pi\right] \textrm{or} \left(\varphi=\frac{\pi}{2}\ 
                                 , \theta\in[\pi,2\pi[\right)
\end{array}
\right..
\end{eqnarray*}
In this example, $v$ is on the equator in a plane rotated $\frac{\pi}{2}$ around the z-axis from the meridian of $x$. The choice $\theta=0$ for the poles connects them through the positive y-axis. The other antipodal points are connected through a path that stays between the parallels of the two points, with an arbitrary choice for antipodal points on the equator.

\section{Conclusion}

We have introduced a new algebraic structure which captures some features of geodesic paths. The particular affine case was studied. The example of geodesics on the $n$-sphere was deduced from the formula  Slerp (spherical linear interpolation). Other interesting lines of study include the connection with affine geometry \cite{bourn_geometrie,Kock2,Kock3} or the geometry of geodesics \cite{busemann-1943,busemann-1955}. The presence of an operation $x\oplus y=q(x,\muu,y)$ admitting cancellation, together with the property $x\oplus y=y\oplus x$, tells us that the category of mobi spaces is  a weakly Mal'tsev category \cite{NMF2012,NMF.08}. This was in fact the starting point that originated our investigation on mobi spaces. Further studies on spaces in which geodesics are not necessarily unique (such as the sphere) are worthwhile pursuing to better understand the importance of the choices that have to be made. This and other topics, such as the development of a homology theory for mobi spaces, will be investigated in future work.

\section*{Acknowledgement}

We thank Anders Kock 
for his helpful comments and suggestions to improve the text.


\end{document}